\title{Global weight optimization of frame structures with polynomial programming}
\author{Marek Tyburec \and Michal Ko{\v{c}}vara \and Martin Kru\v{z}\'ik}
\newtheorem{assumption}{Assumption}
\newcommand*\circled[1]{\tikz[baseline=(char.base)]{
		\node[shape=circle,draw,inner sep=0pt,fill=white, minimum size=4mm] (char) {#1};}}
\newcommand*\squared[1]{\tikz[baseline=(char.base)]{
		\node[shape=rectangle,draw,inner sep=0pt, minimum size=4mm] (char) {#1};}}
\institute{Marek Tyburec\textsuperscript{1,2}\\
	\email{marek.tyburec@cvut.cz}\\
	ORCID: 0000-0003-0798-0948\\
	\\
	Michal Ko{\v{c}}vara\textsuperscript{2,3}\\
	\email{m.kocvara@bham.ac.uk}\\
	ORCID: 0000-0003-4414-0083\\
	\\
	Martin Kru{\v{z}}\'ik\textsuperscript{1,2}\\
	\email{kruzik@utia.cas.cz}\\
	ORCID: 0000-0003-1558-5809\\
	\\\
	\textsuperscript{1}Faculty of Civil Engineering, Czech Technical University in Prague, Th\'{a}kurova 7, 16629 Prague 6, Czech Republic\\
	\\
	\textsuperscript{2}Institute of Information Theory and Automation, Czech Academy of Sciences, Pod Vod{\'a}renskou v{\v{e}}{\v{z}}{\'i} 1143/4, 18200 Prague 8, Czech Republic\\
	\\
	\textsuperscript{3}School of Mathematics, The University of Birmingham, 3 Birmingham B15 2TT, United Kingdom
}
\begin{document}

\maketitle

\begin{abstract}
Weight optimization of frame structures with continuous cross-section parametrization is a challenging non-convex problem that has traditionally been solved by local optimization techniques. Here, we exploit its inherent semi-algebraic structure and adopt the Lasserre hierarchy of relaxations to compute the global minimizers. While this hierarchy generates a natural sequence of lower bounds, we show, under mild assumptions, how to project the relaxed solutions onto the feasible set of the original problem and thus construct feasible upper bounds. Based on these bounds, we develop a simple sufficient condition of global $\varepsilon$-optimality. Finally, we prove that the optimality gap converges to zero in the limit if the set of global minimizers is convex. We demonstrate these results by means of two academic illustrations.
\end{abstract}

\keywords{topology optimization, frame structures, semidefinite programming, polynomial optimization, global optimality}

\section{Introduction}

Finding cross-section parameters that minimize the weight of frame structures for given performance constraints constitutes a fundamental problem of structural design \citep{Bendse2004,Saka_2013}. This problem naturally arises, e.g., in civil~\citep{Thevendran_1992,Mosquera_2014}, automotive \citep{Zuo_2016}, or machine \citep{Tyburec2019} industries.

In contrast to optimization of trusses, for which several convex formulations were established thanks to the linear dependence of the stiffness matrix on the cross-section areas \citep{Stolpe_2017,Kocvara_2017,Bendse2004}, the stiffness of frame elements is non-linear due to the non-linear coupling of cross-section areas with moments of inertia. Therefore, the emergent optimization problems are non-convex in general and very challenging to be solved globally \citep{Yamada_2015,Tyburec2021,Toragay2022}. 

Due to the non-convexity, majority of methods for optimizing frame structures are either local, or (meta-)heuristic \citep{Saka_2013}, thus converging to solutions of unknown quality with respect to the global minimizers. For example, \citet{Saka_1980} optimized the weight of frame structures while accounting for stress and displacement constraints by using sequential linear programming, and \citet{Wang_2006} improved over its solution efficiency by adopting sequential quadratic programming instead. For the same setting, \citet{Khan_1984} and \citet{Chan_1995} developed optimality criteria methods. Further, \citet{Yamada_2015} optimized the weight of frame structures for a prescribed fundamental free-vibration eigenfrequency lower bound by using a sequence of semidefinite programming relaxations, concluding that good-quality local optima are attained.

Several optimization methods have also been developed for the discrete setting of the frame optimization problem, i.e., considering a catalog of available cross-sections. The associated optimization methods are naturally enumerative, with the global optimizers reachable by branch-and-bound-type methods. For example, \citet{Kureta_2013,Hirota_2015} and \citet{Van_Mellaert_2017} formulated mixed-integer linear programs, and \citet{Kanno_2016} developed a mixed-integer second-order conic programming formulation. Another related approach was introduced by \citet{Wang2021}, who solved a single semidefinite programming relaxation whose solution served as an input to a (meta-heuristic) neighborhood search of a differential evolution algorithm. Many other heuristic and meta-heuristic approaches were presented, we refer the reader to \citep{Saka_2013} for an extensive review.
 
Returning to the continuous case, which is the purpose of this work, only three global approaches have been presented to the best of our knowledge. First, \citet{Toragay2022} tackled the displacement-constrained weight minimization problem of frame structures by casting it into a mixed integer quadratically-constrained program. This formulation further involved constraints preventing virtually-intersecting frame elements, bounds on the cross-section areas, and a set of additional cut constraints to reduce the feasible design space. The emergent optimization problems assumed parametrization of the moments of inertia by degree-two polynomials of the cross-section areas, and were solved globally using a branch-and-bound method.

The remaining two methods are based on a fairly different concept: they provide hierarchies of relaxations of increasing size, hence avoid the need for enumeration. Using the sum-of-squares (SOS) hierarchy of specific semidefinite programming relaxations \citep{Kojima_2003}, \citet[Section 5.3]{Murota_2010} optimized the weight of frame structures while bounding the fundamental free-vibration eigenvalue from below. Despite theoretically guaranteed convergence of the objective function values \citep{Kojima_2006}, the associated optimal solutions may remain unknown.

Adopting a dual approach to \citet{Murota_2010}---the moment-sum-of-squares (MSOS) hierarchy \citep{Lasserre_2001,Henrion2006}---\citet{Tyburec2021} considered compliance optimization of volume-constrained frame and shell structures while accounting for multiple loading scenarios and self-weight. Similarly to the SOS hierarchy, the MSOS hierarchy guaranteed a convergence of the objective function values, but further maintained a simple procedure for extracting the global minimizers at the convergence~\citep{Henrion2006}. Furthermore, \citet{Tyburec2021} provided a simple method for projecting the relaxed solutions onto the feasible set of the original problem, thereby providing both lower- and feasible upper-bounds. These bounds naturally assess the quality of the relaxations and guarantee performance gap of the upper-bound designs with respect to the global minimizers. Finally, they showed that the optimality gap approaches zero if the global minimizer is unique.
 
\subsection{Aims and novelty}\label{sec:novelty}

In this contribution, we investigate global weight optimization of frame structures with bounded compliance of multiple loading scenarios. This is achieved by extending our original results for global compliance optimization of frame structures via the MSOS hierarchy \citep{Tyburec2021}.

In the MSOS hierarchy, exactness of relaxations follows from a rank condition. If not satisfied, the relaxation is not exact and its quality is generally uncertain. The goal of this paper is thus to estimate the quality of inexact relaxations for the weight optimization problem by providing a method for constructing feasible upper bounds from the relaxed solutions. These ingredients also settle a simple sufficient condition of global $\varepsilon$-optimality, vanishing in the limit for problems with the global minimizers forming a convex set.

In contrast to \citep{Toragay2022}, our work avoids the need for an enumerative exploration of the feasible space. Moreover, our setup naturally accommodates degree-three (and possibly higher-degree) polynomials, which are necessary, e.g., for the height optimization of rectangular cross-sections, or for thickness optimization of plates and shells \citep[Section 3.5]{Tyburec2021}.

This contribution is organized as follows. We start by a brief introduction to the moment-sum-of-squares hierarchy in Section \ref{sec:po} and formalizing the investigated optimization problem in Section \ref{sec:optproblem}. Section \ref{sec:complprop} provides basic mathematical properties of the compliance function, which we further exploit in Section \ref{sec:upperbound} to generate feasible upper bounds to the original optimization problem under mild assumptions. Finally, Section \ref{sec:msos} presents a modified optimization problem formulation that satisfies convergence assumptions of the moment-sum-of-squares hierarchy, constructs feasible upper bounds from the relaxations, and develops a certificate of global $\varepsilon$-optimality. We illustrate these theoretical findings numerically in Section \ref{sec:examples}, and summarize our contribution in Section \ref{sec:conclusion}.

\section{Moment-sum-of-squares hierarchy}\label{sec:po}

We start with a brief introduction to the moment-sum-of-squares hierarchy. We refer the reader to \citep{Lasserre_2001,Lasserre_2015,Henrion2006} for a more thorough treatment.

Let us consider optimization problems of the form
\begin{subequations}\label{eq:polyproblem}
\begin{align}
\min_{\mathbf{x}}\;& f(\mathbf{x})\\
\text{s.t.}\;& \mathbf{G}(\mathbf{x}) \succeq 0,\label{eq:Gcon}
\end{align}
\end{subequations}
where $f (\mathbf{x}):\mathbb{R}^n\mapsto \mathbb{R}$ and $\mathbf{G}(\mathbf{x}): \mathbb{R}^n \mapsto \mathbb{S}^{m}$ are real polynomial mappings and $\mathbb{S}^m$ stands for the space of $m\times m$ real symmetric square matrices. Further, the notation $\bullet \succ 0$ ($\bullet \succeq 0$) denotes positive definiteness (semi-definiteness) of $\bullet$ and $\mathcal{K}(\mathbf{G}(\mathbf{x}))$ represents the feasible set of \eqref{eq:Gcon}.

Let now $\mathbf{x}\mapsto\mathbf{b}_k (\mathbf{x})$ be the polynomial space basis of polynomials in $\mathbb{R}^n$ of degree at most $k$
\begin{equation}
\mathbf{b}_k (\mathbf{x}) = \left( 1\;\, x_1\;\, \dots\;\, x_n\;\, x_1^2\;\, x_1 x_2\;\, \dots\;\, x_n^2\;\, \dots\;\, x_n^k \right).
\end{equation}
Then, using a coefficient vector $\mathbf{q} \in \mathbb{R}^{\lvert \mathbf{b}_k(\mathbf{x}) \rvert}$, we can write any polynomial $p(\mathbf{x}): \mathbb{R}^n \mapsto \mathbb{R}$ of degree at most $k$ as a linear combination of the monomial entries in the basis $\mathbf{b}_k (\mathbf{x})$, i.e., $p(\mathbf{x}) = \mathbf{q}^\mathrm{T} \mathbf{b}_k(\mathbf{x})$.

Also, let $\left\{\bm{\alpha} \in \mathbb{N}^{n}: \mathbf{1}^\mathrm{T}\bm{\alpha} \le k, \prod_{i=1}^n x_i^{\alpha_i} \in \mathbf{b}_k(\mathbf{x}) \right\}$ be a multi-index and $\mathbf{y} \in \mathbb{R}^{\lvert \mathbf{b}_k(\mathbf{x}) \rvert}$ the moments of probability measures supported on $\mathcal{K}(\mathbf{G}(\mathbf{x}))$. In this work, we label the moment vector entries associated with the monomials $\prod_{i=1}^n x_i^{\alpha_i} \in \mathbf{b}_k (\mathbf{x})$ as $y_{\bm{\alpha}} = y_{\prod_{i=1}^{n} x_i^{\alpha_i}}$.

Furthermore, we need to set a formal definition of the (matrix) sum-of-squares decomposition:
\begin{definition}\label{def:sos}
	The matrix $\bm{\Sigma}(\mathbf{x}): \mathbb{R}^n\mapsto \mathbb{S}^{m}$ is a (matrix) sum-of-squares function if there exists a matrix $\mathbf{H}(\mathbf{x}): \mathbb{R}^n \mapsto \mathbb{R}^{m\times o}$ such that $\forall \mathbf{x}: \bm{\Sigma}(\mathbf{x}) = \mathbf{H}(\mathbf{x})\left[\mathbf{H}(\mathbf{x})\right]^\mathrm{T}$.
\end{definition}

Notice that for $m=1$, Definition \ref{def:sos} reduces to the case of scalar sum-of-squares polynomials. 

Using this definition and $\langle\mathbf{X}, \mathbf{Y} \rangle = \text{Tr}(\mathbf{X}\mathbf{Y}^\mathrm{T})$ to denote the inner product on $\mathbb{S}^m$, we provide the (Archimedean) assumption of algebraic compactness:

\begin{assumption}\label{ass:archimedean}\citep{Henrion2006} Assume that there exist sum-of-squares polynomials $p_0(\mathbf{x}): \mathbb{R}^n \mapsto \mathbb{S}^1$ and $\mathbf{R}(\mathbf{x}): \mathbb{R}^n \mapsto \mathbb{S}^m$ such that the superlevel set $\bigl\{ \mathbf{x}\in \mathbb{R}^n: p_0(\mathbf{x}) + \langle \mathbf{R}(\mathbf{x}), \mathbf{G}(\mathbf{x}) \rangle \ge 0 \bigr\}$ is compact.
\end{assumption}

If Assumption \ref{ass:archimedean} holds, \eqref{eq:polyproblem} is equivalent to an infinite-dimensional linear semidefinite (and hence convex) program
\begin{subequations}\label{eq:truncation}
\begin{align}
f^{(r)} = \min_\mathbf{y}\; & \mathbf{q}_0^\mathrm{T} \mathbf{y}\\
\mathrm{s.t.}\;& \mathbf{M}_{2r}(\mathbf{y}) \succeq 0\\
& \mathbf{M}_{2r-d}(\mathbf{G}\mathbf{y}) \succeq 0,
\end{align}
\end{subequations}
with the relaxation degree $r\rightarrow \infty$. For $r \in \mathbb{N}$ and finite, \eqref{eq:truncation} provides then a finite-dimensional truncation of \eqref{eq:polyproblem}. In \eqref{eq:truncation}, $d$ denotes the maximum degree of a polynomial in $\mathbf{G}(\mathbf{x})$, and $\mathbf{M}_{2r}(\mathbf{y})$ with $\mathbf{M}_{2r-d}(\mathbf{G}\mathbf{y})$ are the (truncated) moment and localizing matrices associated with the moments $\mathbf{y}$ and $\mathbf{G}\mathbf{y}$, respectively. We refer the reader to \citep[Section D]{Henrion2006} to more details.

With increased relaxation degree $r$, larger portions of the infinite-dimensional program are incorporated, so that a convergence to the optimum value $f^*$ of $f$ is obtained in the limit.

\begin{theorem}\label{th:convergence}\citep{Henrion2006}
	Let Assumption~\ref{ass:archimedean} be satisfied. Then, $f^{(r)}\nearrow f^{*}$ as $r \rightarrow \infty$.
\end{theorem}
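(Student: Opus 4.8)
The plan is to establish the convergence of the moment hierarchy as a standard consequence of duality theory for the Lasserre relaxations together with the Archimedean condition encoded in Assumption~\ref{ass:archimedean}. This is the matrix-constraint (semidefinite) generalization of Lasserre's original convergence result, and the reference \citep{Henrion2006} is precisely where the matrix version is worked out, so I would structure the proof as a verification that the hypotheses of that reference's main theorem are met, rather than re-deriving the machinery from scratch.

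First I would observe that each relaxation \eqref{eq:truncation} is a \emph{restriction} of the original problem to pseudo-moment sequences, so that $f^{(r)} \le f^*$ for every finite $r$: any genuine feasible point $\mathbf{x}$ of \eqref{eq:polyproblem} yields, via its Dirac measure, a moment vector $\mathbf{y}$ that is feasible for \eqref{eq:truncation} with objective value $f(\mathbf{x})$, and the moment and localizing matrices of a Dirac measure are automatically positive semidefinite. Next I would argue monotonicity, $f^{(r)} \le f^{(r+1)}$: the feasible set of the degree-$(r+1)$ relaxation projects into that of the degree-$r$ relaxation, since enlarging $r$ only adds constraints (larger moment and localizing matrices must stay positive semidefinite), so the minimum over the smaller, more-constrained set can only increase. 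Together these give a monotone nondecreasing sequence bounded above by $f^*$, which converges; the symbol $\nearrow$ in the statement records exactly this monotone convergence.

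The substantive content, and the step I expect to be the main obstacle, is showing the limit equals $f^*$ rather than merely some value $\le f^*$. Here the Archimedean Assumption~\ref{ass:archimedean} is indispensable: it supplies the sum-of-squares certificate $p_0(\mathbf{x}) + \langle \mathbf{R}(\mathbf{x}), \mathbf{G}(\mathbf{x}) \rangle$ whose superlevel set is compact, which is the matrix analogue of Putinar's Positivstellensatz. The mechanism is that any polynomial strictly positive on $\mathcal{K}(\mathbf{G}(\mathbf{x}))$ admits a representation as an SOS polynomial plus a weighted combination $\langle \mathbf{R}(\mathbf{x}), \mathbf{G}(\mathbf{x}) \rangle$ with matrix SOS weights $\mathbf{R}$. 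Applying this to $f(\mathbf{x}) - f^* + \varepsilon$ for arbitrary $\varepsilon > 0$ produces such a representation of bounded degree; dualizing, this representability forces the relaxed optimum $f^{(r)}$ to exceed $f^* - \varepsilon$ once $r$ is large enough to accommodate the degree of the certificate. I would therefore invoke the matrix Positivstellensatz to guarantee the certificate exists, track its degree, and conclude $f^{(r)} \to f^*$.

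The delicate points to handle carefully are the passage from the primal moment formulation to the dual SOS formulation (which requires ruling out a duality gap, again granted by the Archimedean condition) and the fact that the representation degree is finite but a priori unknown, so convergence is asymptotic rather than finite. Since Assumption~\ref{ass:archimedean} is assumed to hold and the full technical apparatus—including the precise construction of the truncated moment and localizing matrices and the strong-duality argument—is developed in \citep[Section D]{Henrion2006}, the proof reduces to citing that development after confirming our relaxations \eqref{eq:truncation} match its hypotheses. Accordingly I would keep the argument short, emphasizing the monotone-bounded structure and the role of the Archimedean assumption, and defer the heavy lifting to the cited reference.
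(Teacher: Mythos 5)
The paper offers no proof of this theorem at all---it is stated verbatim as a cited result from \citep{Henrion2006}---and your sketch correctly reconstructs the standard argument behind that reference (monotone boundedness of $f^{(r)}$ by $f^{*}$, plus the matrix Putinar Positivstellensatz applied to $f(\mathbf{x})-f^{*}+\varepsilon$), so your approach is consistent with the paper's, which simply defers to the citation. One minor remark: to transfer the SOS certificate into the bound $f^{(r)}\ge f^{*}-\varepsilon$ you only need \emph{weak} duality between the moment and SOS programs, which always holds, so the ``duality gap'' issue you flag as delicate is not actually an obstacle for convergence of the optimal values (it matters only for attainment and minimizer extraction).
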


However, the convergence is generically finite \citep{Nie_2013} and usually occurs at a~low $r$. In addition to the convergence of the objective function value, the global optimality can be recognized and the corresponding minimizers extracted using the flat extension theorem of \citet{Curto1996}. We again refer an interested reader to \citep{Henrion2006,Lasserre_2015} for more information.

\section{Methods}

\subsection{Optimization problem formulation}\label{sec:optproblem}

This paper deals with a global solution of the weight minimization \eqref{eq:original_weight} problem with bounded compliances of the $n_\mathrm{lc}$ load cases \eqref{eq:original_compliances} under linear-elastic equilibrium \eqref{eq:original_equilibrium} and non-negativity of the design variables \eqref{eq:original_areas}:
\begin{subequations}\label{eq:original}
\begin{align}
\min_{\mathbf{a},\mathbf{u}} \;& \sum_{e=1}^{n_\mathrm{e}} \rho_e \ell_e a_e\label{eq:original_weight}\\
\mathrm{s.t.}\;& \mathbf{K}_j(\mathbf{a}) \mathbf{u}_j = \mathbf{f}_j, \quad\forall j \in \{1,\dots, n_\mathrm{lc}\},\label{eq:original_equilibrium}\\
& \overline{c}_j - \mathbf{f}_j^\mathrm{T} \mathbf{u}_j \ge 0, \quad \forall j \in \{1,\dots,n_\mathrm{lc}\},\label{eq:original_compliances}\\
& \mathbf{a}\ge \mathbf{0}.\label{eq:original_areas}
\end{align}
\end{subequations}
In \eqref{eq:original}, $\mathbf{a}\in \mathbb{R}^{n_\mathrm{e}}_{\ge 0}$ is the vector of the design variables such as the cross-section areas of frames, $n_\mathrm{e}$ denotes the number of elements, $\bm{\ell} \in \mathbb{R}^{n_\mathrm{e}}_{>0}$ stands for a vector of volume multipliers so that the volume of the $e$-th element amounts to $\ell_e a_e$, and $\bm{\rho} \in \mathbb{R}^{n_\mathrm{e}}_{>0}$ are the element densities. Further, $\mathbf{\overline{c}} \in \mathbb{R}^{n_\mathrm{lc}}_{>0}$ are upper bounds for the compliance of the $n_\mathrm{lc}$ load cases, and $\mathbf{u}_j \in \mathbb{R}^{n_\mathrm{dof},j}$ with $\mathbf{f}_j \in \mathbb{R}^{n_\mathrm{dof},j}$ stand respectively for the generalized displacement and force vectors of the $j$-th load case, with $n_{\mathrm{dof},j}$ being the number of degrees of freedom. Without loss of generality, we assume that $\forall j \in \{1,\dots, n_\mathrm{lc}\}\}: \mathbf{f}_j \neq \mathbf{0}$.

For bending-resistant structures, such as frames and flat shells, the structural stiffness matrices $\mathbf{K}_j (\mathbf{a})$ follow from the assembly
\begin{equation}\label{eq:polydep}
\mathbf{K}_j(\mathbf{a}) = \mathbf{K}_{j,0} + \sum_{e=1}^{n_\mathrm{e}}\left[ \mathbf{K}^{(1)}_{j,e} a_{e} + \mathbf{K}^{(2)}_{j,e} a_{e}^2 + \mathbf{K}^{(3)}_{j,e} a_{e}^3\right],
\end{equation}
in which $\mathbf{K}_{j,0} \in \mathbb{S}_{\succeq 0}^{n_{\mathrm{dof},j}}$ constitutes a design-independent stiffness matrix, and $\forall i \in \{1,2,3\}: \mathbf{K}^{(i)}_{j,e} \in\mathbb{S}_{\succeq 0}^{n_{\mathrm{dof},j}}$ are portions of the $e$-th element stiffness matrix that depend on the monomials $a_{e}^i$ linearly.

For the optimization problem \eqref{eq:original}, it is natural to assume solvability of the equilibrium system \eqref{eq:original_equilibrium} and forbid rigid body motions if all optimized elements are present:
\begin{assumption}\label{ass:posdef}
	$\forall \mathbf{a}>\mathbf{0}, \forall j \in \{1,\dots,n_\mathrm{lc}\}: \mathbf{K}_j(\mathbf{a})) \succ 0.$
\end{assumption}
\noindent Then, we can reformulate \eqref{eq:original} equivalently to a non-linear semidefinite program, see, e.g., \citep{Achtziger2008,Kanno2011,Tyburec2021},
\begin{subequations}\label{eq:sdp}
\begin{align}
\min_{\mathbf{a}}\; & \sum_{e=1}^{n_\mathrm{e}} \rho_e \ell_e a_e\\
\text{s.t.}\;& \begin{pmatrix}
\overline{c}_j & -\mathbf{f}_j^\mathrm{T}\\
-\mathbf{f}_j & \mathbf{K}_j(\mathbf{a})
\end{pmatrix} \succeq 0, \quad \forall j \in \{1,\dots,n_\mathrm{lc}\},\label{eq:pmi}\\
&\mathbf{a} \ge \mathbf{0}.\label{eq:nonneg}
\end{align}
\end{subequations}
The problem \eqref{eq:sdp} is in general non-convex due to the polynomial nature of $\mathbf{K}_j(\mathbf{a})$, recall Eq.~\eqref{eq:polydep}. Nevertheless, \eqref{eq:pmi} maintains a special structure that is described next.

\subsection{Properties of the compliance function}\label{sec:complprop}

From \eqref{eq:pmi}, while utilizing \citep[Lemma 1]{Tyburec2021}, we can express the compliance function $c_j (\mathbf{a})$ of the $j$-th load case as
\begin{subequations}\label{eq:complfun}
	\begin{align}
	c_j(\mathbf{a}) &= \mathbf{f}_j^\mathrm{T} \mathbf{K}_j(\mathbf{a})^\dagger \mathbf{f}_j,\\
	\mathbf{f}_j &\in \mathrm{Im} \left(\mathbf{K}_j(\mathbf{a})\right),\label{eq:inimi}
	\end{align}
\end{subequations}
where $\bullet^\dagger$ denotes the Moore-Penrose pseudo-inverse of $\bullet$. In this section, we state basic properties of $c_j$.

\begin{proposition}\label{prop:quasiconvex}
Compliance $c_j$ is a~non-increasing function of $\mathbf{a}$.
\end{proposition}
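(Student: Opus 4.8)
The plan is to reduce the statement to a monotonicity property of the stiffness matrix in the positive-semidefinite (Löwner) order and then transport it to the compliance through a variational representation that sidesteps the pseudo-inverse entirely. I read ``non-increasing'' in the componentwise sense: whenever $\mathbf{a}\le\mathbf{a}'$ entrywise (both nonnegative), I must show $c_j(\mathbf{a})\ge c_j(\mathbf{a}')$.

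First I would establish that $\mathbf{a}\mapsto\mathbf{K}_j(\mathbf{a})$ is monotone in the Löwner order on $\mathbb{R}^{n_\mathrm{e}}_{\ge 0}$. From the assembly \eqref{eq:polydep},
\begin{equation*}
\mathbf{K}_j(\mathbf{a}') - \mathbf{K}_j(\mathbf{a}) = \sum_{e=1}^{n_\mathrm{e}}\sum_{i=1}^{3}\left[(a_e')^i - a_e^i\right]\mathbf{K}^{(i)}_{j,e},
\end{equation*}
and since $t\mapsto t^i$ is nondecreasing on $[0,\infty)$ for $i\in\{1,2,3\}$, each scalar factor is nonnegative; combined with $\mathbf{K}^{(i)}_{j,e}\succeq 0$ this yields $\mathbf{K}_j(\mathbf{a})\preceq\mathbf{K}_j(\mathbf{a}')$.

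Next I would replace the pseudo-inverse expression \eqref{eq:complfun} by the equivalent variational form
\begin{equation*}
c_j(\mathbf{a}) = \sup_{\mathbf{v}}\left(2\mathbf{f}_j^\mathrm{T}\mathbf{v} - \mathbf{v}^\mathrm{T}\mathbf{K}_j(\mathbf{a})\mathbf{v}\right),
\end{equation*}
which follows from the stationarity condition $\mathbf{K}_j(\mathbf{a})\mathbf{v}=\mathbf{f}_j$: the concave objective attains its supremum precisely when $\mathbf{f}_j\in\mathrm{Im}(\mathbf{K}_j(\mathbf{a}))$, where the value equals $\mathbf{f}_j^\mathrm{T}\mathbf{K}_j(\mathbf{a})^\dagger\mathbf{f}_j$, and it equals $+\infty$ otherwise (take $\mathbf{v}$ along a kernel direction not orthogonal to $\mathbf{f}_j$). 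With this representation the conclusion is immediate: for each fixed $\mathbf{v}$ the map $\mathbf{a}\mapsto 2\mathbf{f}_j^\mathrm{T}\mathbf{v}-\mathbf{v}^\mathrm{T}\mathbf{K}_j(\mathbf{a})\mathbf{v}$ is non-increasing by the Löwner monotonicity above, and a pointwise supremum of non-increasing functions is non-increasing, so $c_j(\mathbf{a})\ge c_j(\mathbf{a}')$.

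The only genuine subtlety, and the step I expect to be the main obstacle, is reconciling the claim with the pseudo-inverse and the feasibility condition \eqref{eq:inimi} in the singular case. The naive route via anti-monotonicity of the inverse, $\mathbf{K}_j(\mathbf{a})\preceq\mathbf{K}_j(\mathbf{a}')\Rightarrow\mathbf{K}_j(\mathbf{a}')^{-1}\preceq\mathbf{K}_j(\mathbf{a})^{-1}$, presupposes invertibility, which fails as soon as some $a_e=0$ under Assumption~\ref{ass:posdef}. The variational form avoids this, but I would still record that finiteness propagates correctly: for positive-semidefinite matrices $\mathbf{K}_j(\mathbf{a})\preceq\mathbf{K}_j(\mathbf{a}')$ forces $\mathrm{Im}(\mathbf{K}_j(\mathbf{a}))\subseteq\mathrm{Im}(\mathbf{K}_j(\mathbf{a}'))$, since $\mathbf{x}^\mathrm{T}\mathbf{K}_j(\mathbf{a}')\mathbf{x}=0$ implies $\mathbf{x}^\mathrm{T}\mathbf{K}_j(\mathbf{a})\mathbf{x}=0$ and hence $\mathbf{K}_j(\mathbf{a})\mathbf{x}=0$; thus feasibility $\mathbf{f}_j\in\mathrm{Im}(\mathbf{K}_j(\mathbf{a}))$ is inherited by $\mathbf{a}'$, and $c_j(\mathbf{a}')$ stays finite whenever $c_j(\mathbf{a})$ is. A smooth alternative would differentiate, using $\partial c_j/\partial a_e=-\mathbf{u}_j^\mathrm{T}(\partial\mathbf{K}_j/\partial a_e)\mathbf{u}_j\le 0$ with $\partial\mathbf{K}_j/\partial a_e=\mathbf{K}^{(1)}_{j,e}+2a_e\mathbf{K}^{(2)}_{j,e}+3a_e^2\mathbf{K}^{(3)}_{j,e}\succeq 0$, but this covers only points where $\mathbf{K}_j$ is nonsingular; I therefore prefer the variational route as a global, assumption-free argument.
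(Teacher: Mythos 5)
Your proof is correct, but it takes a genuinely different route from the paper's. The paper argues differentially: it derives the sensitivity $\partial c_j/\partial a_e = -\mathbf{f}_j^\mathrm{T}\mathbf{K}_j(\mathbf{a})^\dagger \tfrac{\partial \mathbf{K}_j}{\partial a_e}\mathbf{K}_j(\mathbf{a})^\dagger\mathbf{f}_j$ in Appendix~\ref{app:sensitivity} (where the key technical step is verifying $\mathrm{Span}(\partial\mathbf{K}_j/\partial a_e)\subseteq\mathrm{Span}(\mathbf{K}_j(\mathbf{a}))$ so that the pseudo-inverse differentiation is legitimate) and concludes from $\tfrac{\partial \mathbf{K}_j}{\partial a_e}\succeq 0$ that every partial derivative is non-positive. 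Your argument instead combines L\"owner monotonicity of $\mathbf{a}\mapsto\mathbf{K}_j(\mathbf{a})$ with the complementary-energy representation $c_j(\mathbf{a})=\sup_{\mathbf{v}}\bigl(2\mathbf{f}_j^\mathrm{T}\mathbf{v}-\mathbf{v}^\mathrm{T}\mathbf{K}_j(\mathbf{a})\mathbf{v}\bigr)$ and the fact that a pointwise supremum of non-increasing functions is non-increasing; all steps check out, including the verification that the supremum equals $\mathbf{f}_j^\mathrm{T}\mathbf{K}_j(\mathbf{a})^\dagger\mathbf{f}_j$ exactly when $\mathbf{f}_j\in\mathrm{Im}(\mathbf{K}_j(\mathbf{a}))$ and $+\infty$ otherwise, and the range-inclusion remark $\mathbf{K}\preceq\mathbf{K}'\Rightarrow\mathrm{Im}(\mathbf{K})\subseteq\mathrm{Im}(\mathbf{K}')$ for positive-semidefinite matrices. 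What your route buys is robustness: the pseudo-inverse is discontinuous where the rank of $\mathbf{K}_j(\mathbf{a})$ changes, so the paper's derivative argument is, strictly speaking, local to regions of constant rank, whereas your supremum argument gives global componentwise monotonicity (in the extended-real sense) without any smoothness or constant-rank hypothesis. What the paper's route buys is an explicit sensitivity formula that it reuses elsewhere (e.g., in the discussion surrounding Proposition~\ref{prop:upperbound}); your variational identity does not provide that by-product, though it is the cleaner proof of the monotonicity claim itself.
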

\begin{proof}
Assume that $\{\mathbf{a}: \mathbf{f}_j \in \mathrm{Im}\left(\mathbf{K}_j(\mathbf{a})\right)\}$. Then, based on Appendix \ref{app:sensitivity}, the derivatives of $c_j$ at $\mathbf{a}$ read as
\begin{multline}
\frac{\partial c_j (\mathbf{a})}{\partial a_i} = -\sum_{e=1}^{n_\mathrm{e}}\left[\mathbf{f}_j^\mathrm{T}\mathbf{K}_j(\mathbf{a})^\dagger \frac{d \mathbf{K}_j(\mathbf{a})}{d a_e}\mathbf{K}_j(\mathbf{a})^\dagger \mathbf{f}_j\right].
\end{multline}
Since $\mathbf{K}^{(i)}_{j,e} \succeq 0$ and $\mathbf{a}\ge \mathbf{0}$ by definition, it holds that $\frac{d \mathbf{K}_j(\mathbf{a})}{d a_e} \succeq 0$. Thus, $\forall j \in \{1,\dots,n_\mathrm{e}\}:\frac{\partial c_j(\mathbf{a})}{\partial a_i} \le 0$ and $c_j(\mathbf{a})$ is a non-increasing function.
\end{proof}

Further, we investigate the range of $c_j$. Because $c_j (\mathbf{a})$ is continuous it suffices to find $\inf c_j(\mathbf{a})$ and $\sup c_j (\mathbf{a})$. To this goal and similarly to \citep[Appendix A]{Tyburec2019}, we partition $\mathbf{K}_j(\mathbf{a})$ and $\mathbf{f}_j$ in \eqref{eq:pmi} according to the dependence on the design variables $\mathbf{a}$ as follows. Let $\mathbf{U}_{\mathrm{N},j}$ be the orthonormal bases associated with $\mathrm{Ker}\left(\sum_{e=1}^{n_\mathrm{e}} \sum_{i=1}^3\mathbf{K}_{j,e}^{(i)} a_e^i\right)$ and let $\mathbf{U}_{\mathrm{R},j}$ be the bases of $\mathrm{Im}\left(\sum_{e=1}^{n_\mathrm{e}}\sum_{i=1}^3\mathbf{K}_{j,e}^{(i)} a_e^i\right)$. After projecting $\mathbf{K}_j(\mathbf{a})$ via these bases, we receive the partitioning
\begin{equation}
	\begin{pmatrix}
		\mathbf{U}_{\mathrm{R},j}^\mathrm{T}\\
		\mathbf{U}_{\mathrm{N},j}^\mathrm{T}
	\end{pmatrix} \mathbf{K}_j(\mathbf{a})
	\begin{pmatrix}
		\mathbf{U}_{\mathrm{R},j} & \mathbf{U}_{\mathrm{N},j}
	\end{pmatrix} = 
	\begin{pmatrix}
		\mathbf{K}_{\mathrm{A},j}(\mathbf{a}) & \mathbf{K}_{\mathrm{AB},j}^\mathrm{T}\\
		\mathbf{K}_{\mathrm{AB},j} & \mathbf{K}_{\mathrm{B},j}
	\end{pmatrix}
\end{equation}
in which, without loss of generality,
\begin{equation}\label{eq:posdef}
\forall \mathbf{a}>\mathbf{0}: \mathbf{U}_{\mathrm{R},j}^\mathrm{T} \left(\sum_{e=1}^{n_\mathrm{e}}\mathbf{K}_{j,e}^{(i)} a_e^i\right) \mathbf{U}_{\mathrm{R},j} \succ 0.
\end{equation}
Moreover, $\mathbf{K}_{\mathrm{A},j}(\mathbf{a})$ is the only part that depends on $\mathbf{a}$.

Similarly, we define 
\begin{equation}
\begin{pmatrix}
	\mathbf{f}_{\mathrm{A},j}\\ \mathbf{f}_{\mathrm{B},j}
\end{pmatrix} = \begin{pmatrix}
	\mathbf{U}_{\mathrm{R},j}^\mathrm{T}\\
	\mathbf{U}_{\mathrm{N},j}^\mathrm{T}
\end{pmatrix} \mathbf{f}_j.
\end{equation}
Then, \eqref{eq:pmi} can be equivalently rewritten\footnote{Notice, however, that the solution $\tilde{\mathbf{u}}_j$ to the transformed system $\begin{pmatrix}
		\mathbf{U}_{\mathrm{R},j}^\mathrm{T}& \mathbf{U}_{\mathrm{N},j}^\mathrm{T}
	\end{pmatrix}^\mathrm{T} \mathbf{K}_j(\mathbf{a})
	\begin{pmatrix}
		\mathbf{U}_{\mathrm{R},j} & \mathbf{U}_{\mathrm{N},j}
	\end{pmatrix} \tilde{\mathbf{u}}_j = \begin{pmatrix}
	\mathbf{U}_{\mathrm{R},j}^\mathrm{T}& \mathbf{U}_{\mathrm{N},j}^\mathrm{T}
\end{pmatrix}^\mathrm{T}\mathbf{f}_j$ differs from $\mathbf{u}_j$ in \eqref{eq:original_equilibrium}. The original vector field $\mathbf{u}_j$ can be recovered by another transformation as $\mathbf{u}_j = \begin{pmatrix}
\mathbf{U}_{\mathrm{R},j}& \mathbf{U}_{\mathrm{N},j}
\end{pmatrix}\tilde{\mathbf{u}}_j$.} as 
\begin{equation}\label{eq:constant}
\begin{pmatrix}
c_j & \mathbf{f}_{\mathrm{A},j}^\mathrm{T} & \mathbf{f}_{\mathrm{B},j}^\mathrm{T}\\
\mathbf{f}_{\mathrm{A},j} & \mathbf{K}_{\mathrm{A},j}(\mathbf{a}) & \mathbf{K}_{\mathrm{AB},j}^\mathrm{T}\\
\mathbf{f}_{\mathrm{B},j} & \mathbf{K}_{\mathrm{AB},j} & \mathbf{K}_{\mathrm{B},j}
\end{pmatrix} \succeq 0
\end{equation}
Because $\mathbf{K}_{\mathrm{B},j} \succ 0$ due to Assumption \ref{ass:posdef}, then, using the Schur complement lemma \citep{Haynsworth1968}, \eqref{eq:constant} is reducible to
\begin{align}\label{eq:schl}
	\begin{pmatrix}
	c_{\mathrm{sch},j} & -\mathbf{f}^\mathrm{T}_{\mathrm{sch},j}\\
	-\mathbf{f}_{\mathrm{sch},j} & \mathbf{K}_{\mathrm{sch},j} (\mathbf{a})
	\end{pmatrix}\succeq 0.
\end{align}
with
\begin{subequations}\label{eq:schur}
\begin{align}
\mathbf{f}_{\mathrm{sch},j} &= \mathbf{f}_{\mathrm{A},j} - \mathbf{K}_{\mathrm{AB},j}^\mathrm{T} \mathbf{K}_{\mathrm{B},j}^{-1} \mathbf{f}_{\mathrm{B},j}\\
\mathbf{K}_{\mathrm{sch},j} (\mathbf{a}) &= \mathbf{K}_{\mathrm{A},j}(\mathbf{a}) - \mathbf{K}_{\mathrm{AB},j}^\mathrm{T} \mathbf{K}_{\mathrm{B},j}^{-1} \mathbf{K}_{\mathrm{AB},j}\\
c_{\mathrm{sch},j}(\mathbf{a}) &= c_j(\mathbf{a}) - \mathbf{f}_{\mathrm{B},j}^\mathrm{T} \mathbf{K}_{\mathrm{B},j}^{-1} \mathbf{f}_{\mathrm{B},j}\label{eq:cschur}
\end{align}
\end{subequations}
being the condensed force vector, stiffness matrix, and compliance, respectively. Then, we are ready to prove the following proposition.
\begin{proposition}\label{prop:inf}
	For the partitioning in Eq.~\ref{eq:constant}, it holds that
	\begin{equation}
	\inf c_j (\mathbf{a}) = \mathbf{f}_{\mathrm{B},j}^\mathrm{T} \mathbf{K}_{\mathrm{B},j}^{-1} \mathbf{f}_{\mathrm{B},j}.
	\end{equation}
\begin{proof}
	Based on \eqref{eq:schl}, we have $c_{\mathrm{sch},j}(\mathbf{a}) \ge 0$. Hence, $c_j(\mathbf{a}) \ge \mathbf{f}_{\mathrm{B},j}^\mathrm{T} \mathbf{K}_{\mathrm{B},j}^{-1} \mathbf{f}_{\mathrm{B},j}$ due to \eqref{eq:cschur}. Finally, it suffices to show that $c_j(\mathbf{a}) \rightarrow \mathbf{f}_{\mathrm{B},j}^\mathrm{T} \mathbf{K}_{\mathrm{B},j}^{-1} \mathbf{f}_{\mathrm{B},j}$ for $\mathbf{a}\rightarrow \bm{\infty}$. Because $\forall i, e: \mathbf{K}_{j,e}^{(i)} \succeq 0$ and \eqref{eq:posdef}, the eigenvalues of $\mathbf{K}_{\mathrm{A},j}(\mathbf{a})$ approach infinity as $\mathbf{a}\rightarrow \bm{\infty}$. Hence,
	\begin{equation}
	c_{\mathrm{sch},j}(\mathbf{a})=\mathbf{f}^\mathrm{T}_{\mathrm{sch},j} \mathbf{K}_{\mathrm{sch},j}(\mathbf{a})^{-1} \mathbf{f}_{\mathrm{sch},j} \rightarrow 0 \text{ as } \mathbf{a}\rightarrow \bm{\infty}
	\end{equation} 
	and, therefore, $c_j(\mathbf{a}) \rightarrow \mathbf{f}_{\mathrm{B},j}^\mathrm{T} \mathbf{K}_{\mathrm{B},j}^{-1} \mathbf{f}_{\mathrm{B},j}$ based on \eqref{eq:cschur}.
\end{proof}
\end{proposition}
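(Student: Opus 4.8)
The plan is to sandwich the infimum: show the claimed value is a lower bound for $c_j(\mathbf{a})$ on the whole feasible set, and then show it is attained in the limit $\mathbf{a}\to\bm{\infty}$. The Schur-complement reduction \eqref{eq:schl}--\eqref{eq:schur} is the right vehicle, since it collects all the $\mathbf{a}$-dependence into $\mathbf{K}_{\mathrm{sch},j}(\mathbf{a})$ and, through \eqref{eq:cschur}, displays the constant offset $\mathbf{f}_{\mathrm{B},j}^\mathrm{T}\mathbf{K}_{\mathrm{B},j}^{-1}\mathbf{f}_{\mathrm{B},j}$ that we must identify with the infimum.

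For the lower bound I would simply read off the $(1,1)$ entry of the positive-semidefinite matrix in \eqref{eq:schl}: diagonal entries of a PSD matrix are nonnegative, so $c_{\mathrm{sch},j}(\mathbf{a})\ge 0$, and \eqref{eq:cschur} then gives $c_j(\mathbf{a})\ge\mathbf{f}_{\mathrm{B},j}^\mathrm{T}\mathbf{K}_{\mathrm{B},j}^{-1}\mathbf{f}_{\mathrm{B},j}$ for every feasible $\mathbf{a}$, hence the same bound on $\inf c_j$.

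For tightness I would exploit that $c_j$ is the smallest scalar admissible in the PMI, so the Schur inequality is in fact an equality, $c_{\mathrm{sch},j}(\mathbf{a})=\mathbf{f}_{\mathrm{sch},j}^\mathrm{T}\mathbf{K}_{\mathrm{sch},j}(\mathbf{a})^{-1}\mathbf{f}_{\mathrm{sch},j}$, and drive $\mathbf{a}\to\bm{\infty}$ (legitimate because $c_j$ is non-increasing by Proposition~\ref{prop:quasiconvex}). It then suffices that $\mathbf{K}_{\mathrm{sch},j}(\mathbf{a})^{-1}\to\mathbf{0}$. Since $\mathbf{K}_{\mathrm{sch},j}(\mathbf{a})$ differs from $\mathbf{K}_{\mathrm{A},j}(\mathbf{a})$ only by a fixed term, this reduces to showing $\lambda_{\min}(\mathbf{K}_{\mathrm{A},j}(\mathbf{a}))\to\infty$, whence $c_{\mathrm{sch},j}(\mathbf{a})\to 0$ and $c_j(\mathbf{a})\to\mathbf{f}_{\mathrm{B},j}^\mathrm{T}\mathbf{K}_{\mathrm{B},j}^{-1}\mathbf{f}_{\mathrm{B},j}$ by \eqref{eq:cschur}.

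The main obstacle is precisely this eigenvalue divergence: I must rule out that some direction in the range space stays bounded while $\mathbf{a}\to\bm{\infty}$. Here \eqref{eq:posdef} is the key ingredient, as it certifies that the $\mathbf{a}$-dependent stiffness restricted to $\mathrm{Im}(\cdot)$ is strictly positive definite for $\mathbf{a}>\mathbf{0}$; combined with $\mathbf{K}_{j,e}^{(i)}\succeq 0$, every eigenvalue of $\mathbf{K}_{\mathrm{A},j}(\mathbf{a})$ must then grow without bound. A secondary point to check is that the compliance defined through the pseudo-inverse in \eqref{eq:complfun} really coincides with the minimal admissible $(1,1)$-entry, so that the Schur inequality used above holds with equality.
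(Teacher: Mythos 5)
Your proposal is correct and follows essentially the same route as the paper: the lower bound is obtained from nonnegativity of the $(1,1)$ entry of the positive-semidefinite matrix in \eqref{eq:schl} combined with \eqref{eq:cschur}, and tightness is shown by letting $\mathbf{a}\rightarrow\bm{\infty}$, using \eqref{eq:posdef} together with $\mathbf{K}_{j,e}^{(i)}\succeq 0$ to conclude that the eigenvalues of $\mathbf{K}_{\mathrm{A},j}(\mathbf{a})$ diverge and hence $c_{\mathrm{sch},j}(\mathbf{a})\rightarrow 0$. Your additional remarks on verifying that the pseudo-inverse compliance is the minimal admissible $(1,1)$ entry and on using monotonicity to justify the limit are sound refinements of details the paper leaves implicit.
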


\begin{remark}\label{rem:inf0}
	For the case of $\mathbf{K}_{\mathrm{B},j} \in \mathbb{S}^{0}$, we have $\inf_{\mathbf{a}} c_j \rightarrow 0$.
\end{remark}

Next, we consider the supremum part.

\begin{proposition}\label{prop:sup}
	For the partitioning in Eq.~\ref{eq:constant}, it holds that
	\begin{enumerate}
		\item $\sup c_j(\mathbf{a}) = \mathbf{f}_{\mathrm{sch},j}^\mathrm{T} \left(\mathbf{K}_{\mathrm{A},j,0}- \mathbf{K}_{\mathrm{AB},j}^\mathrm{T} \mathbf{K}_{\mathrm{B},j}^{-1} \mathbf{K}_{\mathrm{AB},j}\right)^{-1} \mathbf{f}_{\mathrm{sch},j}$\\if  $\mathbf{f}_{\mathrm{sch},j} \in \mathrm{Im}(\mathbf{K}_{\mathrm{A},j,0}- \mathbf{K}_{\mathrm{AB},j}^\mathrm{T} \mathbf{K}_{\mathrm{B},j}^{-1} \mathbf{K}_{\mathrm{AB},j})$
		\item $\sup c_j(\mathbf{a})= \infty$ otherwise.
	\end{enumerate}
	\begin{proof}
		The first part follows from \eqref{eq:schl} and corresponds to the setting when fixed elements are able to transmit prescribed loading to supports. For the second part, setting $\mathbf{a}\rightarrow\mathbf{0}$ renders the displacement field arbitrarily large, and thus the compliance infinite.
	\end{proof}
\end{proposition}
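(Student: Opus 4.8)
The plan is to exploit the partitioned Schur-complement form \eqref{eq:schl}, which tells us that
$c_{\mathrm{sch},j}(\mathbf{a}) = \mathbf{f}_{\mathrm{sch},j}^\mathrm{T}\mathbf{K}_{\mathrm{sch},j}(\mathbf{a})^{\dagger}\mathbf{f}_{\mathrm{sch},j}$ whenever $\mathbf{f}_{\mathrm{sch},j}\in\mathrm{Im}(\mathbf{K}_{\mathrm{sch},j}(\mathbf{a}))$, with $c_j = c_{\mathrm{sch},j} + \mathbf{f}_{\mathrm{B},j}^\mathrm{T}\mathbf{K}_{\mathrm{B},j}^{-1}\mathbf{f}_{\mathrm{B},j}$ by \eqref{eq:cschur}. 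Since by Proposition \ref{prop:quasiconvex} the compliance $c_j$ is non-increasing in $\mathbf{a}$, its supremum over $\mathbf{a}\ge\mathbf{0}$ is attained in the limit $\mathbf{a}\rightarrow\mathbf{0}$. First I would evaluate $\mathbf{K}_{\mathrm{A},j}(\mathbf{a})$ at this limit: from \eqref{eq:polydep}, every $a_e$-dependent term vanishes as $\mathbf{a}\rightarrow\mathbf{0}$, leaving only the design-independent contribution, which I denote $\mathbf{K}_{\mathrm{A},j,0} = \mathbf{U}_{\mathrm{R},j}^\mathrm{T}\mathbf{K}_{j,0}\mathbf{U}_{\mathrm{R},j}$. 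Substituting into \eqref{eq:schur} yields the limiting condensed stiffness $\mathbf{K}_{\mathrm{A},j,0} - \mathbf{K}_{\mathrm{AB},j}^\mathrm{T}\mathbf{K}_{\mathrm{B},j}^{-1}\mathbf{K}_{\mathrm{AB},j}$, which is exactly the matrix appearing in the statement.

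For the first case, I would assume $\mathbf{f}_{\mathrm{sch},j}\in\mathrm{Im}(\mathbf{K}_{\mathrm{A},j,0} - \mathbf{K}_{\mathrm{AB},j}^\mathrm{T}\mathbf{K}_{\mathrm{B},j}^{-1}\mathbf{K}_{\mathrm{AB},j})$, which guarantees that the equilibrium system remains solvable in the limit and that the force is carried by the design-independent (fixed) elements down to the supports. By continuity of $c_j$ together with monotonicity from Proposition \ref{prop:quasiconvex}, the value $c_j(\mathbf{a})$ increases toward its supremum as $\mathbf{a}\rightarrow\mathbf{0}$, and passing to the limit in \eqref{eq:schl} gives
$\sup c_j(\mathbf{a}) = \mathbf{f}_{\mathrm{sch},j}^\mathrm{T}(\mathbf{K}_{\mathrm{A},j,0} - \mathbf{K}_{\mathrm{AB},j}^\mathrm{T}\mathbf{K}_{\mathrm{B},j}^{-1}\mathbf{K}_{\mathrm{AB},j})^{\dagger}\mathbf{f}_{\mathrm{sch},j}$, plus the additive constant from \eqref{eq:cschur}. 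The image condition ensures the pseudo-inverse acts as a genuine inverse on the relevant subspace, matching the stated formula.

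For the second case, I would argue by contradiction to the image condition: if $\mathbf{f}_{\mathrm{sch},j}\notin\mathrm{Im}(\mathbf{K}_{\mathrm{A},j,0} - \mathbf{K}_{\mathrm{AB},j}^\mathrm{T}\mathbf{K}_{\mathrm{B},j}^{-1}\mathbf{K}_{\mathrm{AB},j})$, then as $\mathbf{a}\rightarrow\mathbf{0}$ the component of $\mathbf{f}_{\mathrm{sch},j}$ lying outside the image of the limiting condensed stiffness cannot be equilibrated by a bounded displacement, so the required displacement field blows up and $c_j(\mathbf{a})\rightarrow\infty$. The cleanest way to make this rigorous is a spectral argument: decompose $\mathbf{f}_{\mathrm{sch},j}$ along the eigenvectors of $\mathbf{K}_{\mathrm{sch},j}(\mathbf{a})$ and observe that the eigenvalues corresponding to directions in the kernel of the limiting matrix tend to zero, while the projection of $\mathbf{f}_{\mathrm{sch},j}$ onto those directions stays bounded away from zero; the quadratic form $\mathbf{f}_{\mathrm{sch},j}^\mathrm{T}\mathbf{K}_{\mathrm{sch},j}(\mathbf{a})^{\dagger}\mathbf{f}_{\mathrm{sch},j}$ then diverges.

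The main obstacle I anticipate is the careful handling of the pseudo-inverse across the limit: as $\mathbf{a}\rightarrow\mathbf{0}$ the rank of $\mathbf{K}_{\mathrm{sch},j}(\mathbf{a})$ may drop, so $\mathbf{K}_{\mathrm{sch},j}(\mathbf{a})^{\dagger}$ is not continuous there, and one must verify that the image condition \eqref{eq:inimi} persists along the approach in case one and fails in case two. Making the spectral blow-up argument precise — controlling the vanishing eigenvalues against the fixed off-kernel component of $\mathbf{f}_{\mathrm{sch},j}$ uniformly — is where the real work lies; everything else reduces to substituting the limit into \eqref{eq:schl} and \eqref{eq:schur} and invoking the monotonicity already established in Proposition \ref{prop:quasiconvex}.
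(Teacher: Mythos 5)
Your argument follows essentially the same route as the paper's own (very terse) proof: monotonicity from Proposition~\ref{prop:quasiconvex} pushes the supremum to the limit $\mathbf{a}\rightarrow\mathbf{0}$, where the Schur-complement form \eqref{eq:schl} yields the stated quadratic form when the image condition holds and diverges otherwise. Your additional care --- the spectral blow-up argument for case two and the observation that the constant $\mathbf{f}_{\mathrm{B},j}^\mathrm{T}\mathbf{K}_{\mathrm{B},j}^{-1}\mathbf{f}_{\mathrm{B},j}$ from \eqref{eq:cschur} should in principle be added to the stated value --- only fills in details the paper leaves implicit.
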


\subsection{Upper bounds to program \eqref{eq:sdp} by scalarization}\label{sec:upperbound}

Using these compliance function properties, this section develops a method for obtaining feasible upper bounds to \eqref{eq:sdp} under mild assumptions.

Let $\tilde{\mathbf{a}} \in \mathbb{R}^{n_\mathrm{e}}_{\ge 0}$ be a vector of fixed ratios of the cross-section areas such that $\forall j \in \{1,\dots,n_\mathrm{lc}\}:\mathbf{f}_j \in \mathrm{Im}\left(\mathbf{K}_j(\tilde{\mathbf{a}})\right)$. Further, define a scaling parametrization of the cross-section areas via a parameter $\delta>0$, i.e., $\mathbf{a}(\delta) = \delta \tilde{\mathbf{a}}$. In what follows, we state the conditions under which the values in Propositions \ref{prop:inf} and \ref{prop:sup} remain valid even though we replace $c_j (\mathbf{a})$ with $c_j (\mathbf{a}(\delta))$.

\begin{proposition}\label{prop:infima}
	If $\mathbf{f}_{\mathrm{A},j} \in \mathrm{Im}\left(\mathbf{U}_{\mathrm{R},j}^\mathrm{T} \left(\sum_{e=1}^{n_\mathrm{e}}\mathbf{K}_{j,e}^{(i)} \tilde{a}_e^i\right) \mathbf{U}_{\mathrm{R},j}\right)$
	holds, then, $\inf_{\mathbf{a}} c_j = \inf_{\delta} c_j$.
\begin{proof}
	The proof follows from Proposition~\ref{prop:inf}.
\end{proof}
\end{proposition}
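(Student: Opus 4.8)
The strategy is to sandwich $\inf_\delta c_j$ between $\inf_\mathbf{a} c_j$ on both sides: the lower bound is immediate, and the matching upper bound is obtained by driving $\delta\to\infty$ and reusing the limiting mechanism of Proposition~\ref{prop:inf}.

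First I would check that the ray $\mathbf{a}(\delta)=\delta\tilde{\mathbf{a}}$ is admissible for every $\delta>0$. Since each $\mathbf{K}^{(i)}_{j,e}$ and $\mathbf{K}_{j,0}$ are positive semidefinite, $\mathrm{Im}(\mathbf{K}_j(\mathbf{a}(\delta)))$ equals the sum of the images of the summands, and this sum is unchanged under the positive rescalings $\delta^i\tilde a_e^i$; hence it coincides with $\mathrm{Im}(\mathbf{K}_j(\tilde{\mathbf{a}}))\ni\mathbf{f}_j$ for all $\delta>0$. Thus $c_j(\mathbf{a}(\delta))$ is finite along the ray, the ray lies inside the feasible set of $\inf_\mathbf{a} c_j$, and $\inf_\delta c_j\ge\inf_\mathbf{a} c_j=\mathbf{f}_{\mathrm{B},j}^\mathrm{T}\mathbf{K}_{\mathrm{B},j}^{-1}\mathbf{f}_{\mathrm{B},j}$ by Proposition~\ref{prop:inf}. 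For the reverse inequality I would pass to the Schur form \eqref{eq:schl}, which through \eqref{eq:cschur} gives $c_j(\mathbf{a}(\delta))=\mathbf{f}_{\mathrm{B},j}^\mathrm{T}\mathbf{K}_{\mathrm{B},j}^{-1}\mathbf{f}_{\mathrm{B},j}+\mathbf{f}_{\mathrm{sch},j}^\mathrm{T}\mathbf{K}_{\mathrm{sch},j}(\mathbf{a}(\delta))^\dagger\mathbf{f}_{\mathrm{sch},j}$, so it suffices to prove that the residual $c_{\mathrm{sch},j}(\mathbf{a}(\delta))$ vanishes as $\delta\to\infty$.

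This is precisely the limit treated in Proposition~\ref{prop:inf}, up to one caveat the hypothesis is meant to remove. Writing $\mathbf{U}_{\mathrm{R},j}=(\mathbf{U}_1\;\mathbf{U}_2)$ with $\mathbf{U}_1$ spanning $\mathrm{Im}(\mathbf{U}_{\mathrm{R},j}^\mathrm{T}(\sum_{e,i}\mathbf{K}^{(i)}_{j,e}\tilde a_e^i)\mathbf{U}_{\mathrm{R},j})$, only the $\mathbf{U}_1$-block of the design-dependent part $\mathbf{U}_{\mathrm{R},j}^\mathrm{T}(\sum_{e,i}\mathbf{K}^{(i)}_{j,e}\delta^i\tilde a_e^i)\mathbf{U}_{\mathrm{R},j}$ acquires eigenvalues diverging with $\delta$ (by \eqref{eq:posdef} restricted to the elements with $\tilde a_e>0$), while its $\mathbf{U}_2$-block stays zero. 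If $\tilde{\mathbf{a}}>\mathbf{0}$ then $\mathbf{U}_2$ is empty, every eigenvalue of $\mathbf{K}_{\mathrm{sch},j}(\mathbf{a}(\delta))$ diverges, and $c_{\mathrm{sch},j}(\mathbf{a}(\delta))\to0$ verbatim as in Proposition~\ref{prop:inf}; in this case the hypothesis holds automatically by \eqref{eq:posdef}, and the claim is a direct corollary.

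The genuine work, and the reason for the image hypothesis, is the boundary case $\tilde a_e=0$ for some $e$, where $\mathbf{U}_2$ is non-trivial and the stiffness fails to blow up on all of $\mathrm{span}(\mathbf{U}_{\mathrm{R},j})$. Here $\mathbf{f}_{\mathrm{A},j}\in\mathrm{Im}(\mathbf{U}_{\mathrm{R},j}^\mathrm{T}(\sum_{e,i}\mathbf{K}^{(i)}_{j,e}\tilde a_e^i)\mathbf{U}_{\mathrm{R},j})$ says exactly $\mathbf{U}_2^\mathrm{T}\mathbf{f}_{\mathrm{A},j}=\mathbf{0}$, i.e.\ the design-active part of the load is supported on the subspace that is being stiffened. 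I would then show, using that $\mathbf{K}_{\mathrm{sch},j}(\mathbf{a}(\delta))\succeq0$ as a principal block of the positive-semidefinite matrix in \eqref{eq:schl}, that $\mathbf{f}_{\mathrm{sch},j}$ is carried by $\mathrm{Im}(\mathbf{K}_{\mathrm{sch},j}(\mathbf{a}(\delta)))$ and that the quadratic form is effectively paired only with the diverging $\mathbf{U}_1$-block, whence $c_{\mathrm{sch},j}(\mathbf{a}(\delta))\to0$ and the two infima coincide. I expect this pseudo-inverse bookkeeping on the non-stiffened complement $\mathrm{span}(\mathbf{U}_2)$ to be the main obstacle: one must verify that the fixed coupling entering through $\mathbf{K}_{\mathrm{AB},j}$ and $\mathbf{K}_{j,0}$ leaves no residual contribution of $\mathbf{f}_{\mathrm{sch},j}$ there, the elimination of the $\mathbf{K}_{\mathrm{AB},j}$-part being exactly what the image hypothesis secures.
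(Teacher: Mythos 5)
Your overall route is the one the paper itself gestures at: restrict to the ray $\mathbf{a}(\delta)=\delta\tilde{\mathbf{a}}$, note that $\inf_\delta c_j\ge\inf_{\mathbf{a}}c_j$ is trivial, and obtain the reverse inequality by letting $\delta\to\infty$ in the Schur-complement representation \eqref{eq:schl}--\eqref{eq:cschur}, exactly as in Proposition~\ref{prop:inf}. The admissibility of the ray and the interior case $\tilde{\mathbf{a}}>\mathbf{0}$ are handled correctly. The difficulty is that the only case in which the image hypothesis does any work --- some $\tilde a_e=0$, so that the design-dependent block stiffens only $\mathrm{span}(\mathbf{U}_1)\subsetneq\mathrm{span}(\mathbf{U}_{\mathrm{R},j})$ --- is precisely the step you announce (``I would then show\dots'') but do not carry out, and the mechanism you assert for it does not follow from the hypothesis as you have read it. The hypothesis gives $\mathbf{U}_2^\mathrm{T}\mathbf{f}_{\mathrm{A},j}=\mathbf{0}$, but the vector entering $c_{\mathrm{sch},j}$ is $\mathbf{f}_{\mathrm{sch},j}=\mathbf{f}_{\mathrm{A},j}-\mathbf{K}_{\mathrm{AB},j}^\mathrm{T}\mathbf{K}_{\mathrm{B},j}^{-1}\mathbf{f}_{\mathrm{B},j}$, and nothing you have established controls $\mathbf{U}_2^\mathrm{T}\mathbf{K}_{\mathrm{AB},j}^\mathrm{T}\mathbf{K}_{\mathrm{B},j}^{-1}\mathbf{f}_{\mathrm{B},j}$: the subspace $\mathrm{span}(\mathbf{U}_2)$ is determined by the support of $\tilde{\mathbf{a}}$, while $\mathbf{K}_{\mathrm{AB},j}$ and $\mathbf{f}_{\mathrm{B},j}$ come from $\mathbf{K}_{j,0}$ and the load, so there is no a priori orthogonality.

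To see that this is not mere bookkeeping, use the variational characterization $\mathbf{g}^\mathrm{T}\mathbf{M}^\dagger\mathbf{g}=\max_{\mathbf{v}}\{2\mathbf{g}^\mathrm{T}\mathbf{v}-\mathbf{v}^\mathrm{T}\mathbf{M}\mathbf{v}\}$, valid for $\mathbf{M}\succeq0$ and $\mathbf{g}\in\mathrm{Im}(\mathbf{M})$, with $\mathbf{g}=\mathbf{f}_{\mathrm{sch},j}$ and $\mathbf{M}=\mathbf{K}_{\mathrm{sch},j}(\delta\tilde{\mathbf{a}})$. Restricting the maximization to $\mathbf{v}\in\mathrm{span}(\mathbf{U}_2)$, where $\mathbf{M}$ does not depend on $\delta$ (it reduces there to the corresponding block of the Schur complement of $\mathbf{K}_{\mathrm{B},j}$ in the projected $\mathbf{K}_{j,0}$), produces a $\delta$-independent lower bound on $c_{\mathrm{sch},j}(\delta\tilde{\mathbf{a}})$ that is strictly positive whenever $\mathbf{U}_2^\mathrm{T}\mathbf{f}_{\mathrm{sch},j}\neq\mathbf{0}$; in that event the infimum over the ray exceeds $\mathbf{f}_{\mathrm{B},j}^\mathrm{T}\mathbf{K}_{\mathrm{B},j}^{-1}\mathbf{f}_{\mathrm{B},j}$ and the claimed equality of infima fails. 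Your argument therefore needs either to derive $\mathbf{U}_2^\mathrm{T}\mathbf{f}_{\mathrm{sch},j}=\mathbf{0}$ from the stated condition (which requires an additional argument about the coupling block $\mathbf{K}_{\mathrm{AB},j}$ that you only promise), or to read the hypothesis as a condition on $\mathbf{f}_{\mathrm{sch},j}$ rather than on $\mathbf{f}_{\mathrm{A},j}$. In the situations the paper actually uses ($\mathbf{K}_{j,0}=\mathbf{0}$, or $\tilde{\mathbf{a}}>\mathbf{0}$ so that $\mathbf{U}_2$ is empty) the issue disappears, consistent with the remarks following the proposition, but as written your proof asserts the decisive step rather than proving it.
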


\begin{remark}
	The condition in Proposition \ref{prop:infima} may not be satisfied only if $\mathbf{K}_{j,0}\neq \mathbf{0}$ and $\exists e: \tilde{a}_e=0$. From the mechanical point of view, such situation corresponds to the case of carrying loads through elements with prescribed stiffness, although the optimized domain would allow load transfer through elements that are eliminated with $\tilde{a}_e=0$.
\end{remark}

\begin{remark}
	If $\mathbf{K}_{j,0} = \mathbf{0}$, then the condition in Proposition \ref{prop:infima} simplifies to $\mathbf{f}_j \in \mathrm{Im}(\mathbf{K}_j(\tilde{\mathbf{a}}))$.
\end{remark}

For the case of upper bounds in Proposition \ref{prop:sup}, the situation is considerably easier---they remain the same regardless of $\tilde{\mathbf{a}}$.

Finally, we can state the procedure for constructing feasible upper bounds.

\begin{proposition}\label{prop:upperbound}
	Let $\forall j \in \{1,\dots, n_\mathrm{lc}\}:\inf_{\delta} c_j (\delta)< \overline{c}_j$ hold and let the feasible set of \eqref{eq:original} have a non-empty interior. Then, there exists $\delta> 0$ such that $\delta \tilde{\mathbf{a}}$ is feasible to \eqref{eq:sdp}. Furthermore, $\delta$ follows from a solution to the univariate optimization problem
	\begin{subequations}\label{eq:uni}
	\begin{align}
	\min_{\delta}\; & \delta\\
	\mathrm{s.t.} \;& \mathbf{f}_j^\mathrm{T} \mathbf{K}_j(\delta)^{\dagger} \mathbf{f}_j \le \overline{c}_j, \quad \forall j \in \{1,\dots ,n_\mathrm{lc}\},\label{eq:compl_uni}\\
	& \delta > 0.
	\end{align}
	\end{subequations}
\begin{proof}
	Since $\sup_{\delta} c_j (\delta)$ does not depend on $\tilde{\mathbf{a}}$, Proposition \ref{prop:sup}, \eqref{eq:uni} is solvable whenever \eqref{eq:sdp} is due to the assumption on infimum. \eqref{eq:compl_uni} is a quasi-convex function due to Proposition~\ref{prop:quasiconvex} but also a continuous function due to the constant rank property of $\mathbf{K}_j(\delta)$ for all $\delta>0$ \citep{Penrose1955}, so that the equality sign in \eqref{eq:compl_uni} can always be satisfied for at least one load case.
\end{proof}
\end{proposition}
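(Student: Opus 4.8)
The plan is to reduce feasibility of the scaled design $\delta\tilde{\mathbf{a}}$ to one scalar inequality per load case, and then exploit monotonicity and continuity of the compliance along the ray $\mathbf{a}(\delta)=\delta\tilde{\mathbf{a}}$ to pin down the minimal admissible scaling. First I would characterize when $\delta\tilde{\mathbf{a}}$ is feasible to \eqref{eq:sdp}. Since $\delta>0$ and $\tilde{\mathbf{a}}\ge\mathbf{0}$, the sign constraint \eqref{eq:nonneg} holds automatically, and $\mathbf{K}_j(\delta\tilde{\mathbf{a}})\succeq\mathbf{0}$ because it is a sum of positive semidefinite terms, recall \eqref{eq:polydep}. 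The key observation is that $\mathrm{Im}(\mathbf{K}_j(\delta\tilde{\mathbf{a}}))$ is independent of $\delta$ for every $\delta>0$: each summand $\mathbf{K}^{(i)}_{j,e}\delta^i\tilde{a}_e^i$ keeps its image under the positive scaling $\delta^i\tilde{a}_e^i$, and the image of a sum of positive semidefinite matrices is the sum of their images, so $\mathrm{Im}(\mathbf{K}_j(\delta\tilde{\mathbf{a}}))=\mathrm{Im}(\mathbf{K}_j(\tilde{\mathbf{a}}))$ throughout. Hence the range condition \eqref{eq:inimi} holds for all $\delta>0$ by the hypothesis $\mathbf{f}_j\in\mathrm{Im}(\mathbf{K}_j(\tilde{\mathbf{a}}))$, and applying the Schur complement lemma \citep{Haynsworth1968} to \eqref{eq:pmi} reduces the matrix inequality exactly to $c_j(\delta)=\mathbf{f}_j^\mathrm{T}\mathbf{K}_j(\delta)^\dagger\mathbf{f}_j\le\overline{c}_j$. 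Thus $\delta\tilde{\mathbf{a}}$ is feasible to \eqref{eq:sdp} if and only if $\delta$ is feasible to \eqref{eq:uni}.

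Next I would establish existence. By Proposition~\ref{prop:quasiconvex} the map $\delta\mapsto c_j(\delta)$ is non-increasing, so $\inf_\delta c_j(\delta)=\lim_{\delta\to\infty}c_j(\delta)$; together with the assumption $\inf_\delta c_j(\delta)<\overline{c}_j$ this furnishes for each $j$ a threshold $\delta_j>0$ with $c_j(\delta)\le\overline{c}_j$ for all $\delta\ge\delta_j$. Choosing $\delta=\max_j\delta_j$ satisfies every compliance constraint simultaneously, and by the characterization above this point is feasible to \eqref{eq:sdp}, which proves the first claim.

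For the characterization through \eqref{eq:uni}, the feasible set is nonempty by the previous step. Because $\mathbf{K}_j(\delta)$ has constant rank on $\delta>0$, its pseudoinverse, and hence $c_j(\delta)$, depends continuously on $\delta$ \citep{Penrose1955}; combined with monotonicity, each constraint region $\{\delta>0:c_j(\delta)\le\overline{c}_j\}$ is a closed half-line $[\delta_j,\infty)$, so the overall feasible set is $[\max_j\delta_j,\infty)$ and the infimum of $\delta$ is attained at $\delta^\star=\max_j\delta_j$. Continuity then forces the governing load case to meet its bound with equality, $c_j(\delta^\star)=\overline{c}_j$, so at least one constraint in \eqref{eq:compl_uni} is active at the optimum.

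The main obstacle is the constant-rank argument: showing that $\mathrm{Im}(\mathbf{K}_j(\delta\tilde{\mathbf{a}}))$ does not vary with $\delta>0$ is what simultaneously secures the range condition \eqref{eq:inimi} along the entire ray and the continuity of $c_j(\delta)$ needed to attain equality. A secondary point requiring care is the degenerate regime in which the design-independent part $\mathbf{K}_{j,0}$ alone already holds every compliance below its bound as $\delta\to0^+$; there the infimum of \eqref{eq:uni} would be approached at the boundary $\delta=0$ with no active constraint. It is precisely the behaviour of $\sup c_j$ governed by Proposition~\ref{prop:sup}, together with the nonempty-interior assumption on \eqref{eq:original}, that rules this case out and keeps $\delta^\star$ bounded away from zero.
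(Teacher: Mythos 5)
Your proposal is correct and follows essentially the same route as the paper's proof: monotonicity of $c_j$ along the ray (Proposition~\ref{prop:quasiconvex}), continuity of $\delta\mapsto c_j(\delta)$ via the constant rank of $\mathbf{K}_j(\delta\tilde{\mathbf{a}})$ on $\delta>0$, and the infimum assumption to guarantee solvability and activity of at least one compliance constraint. You merely make explicit what the paper leaves implicit --- the $\delta$-invariance of $\mathrm{Im}\left(\mathbf{K}_j(\delta\tilde{\mathbf{a}})\right)$, the generalized Schur-complement reduction of \eqref{eq:pmi} to \eqref{eq:compl_uni}, and the degenerate regime where every $\delta>0$ is feasible --- which strengthens rather than alters the argument.
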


From the numerical perspective, the value of $\inf_{\delta} c_j$ is obtained via Proposition \ref{prop:inf} for a partitioning in \eqref{eq:constant} that follows from $\mathbf{a}(\delta)=\tilde{\mathbf{a}}\delta$.

Due to the quasi-convexity of \eqref{eq:compl_uni}, the optimal scaling factor $\delta$, and thus an (upper-bound) feasible solution to \eqref{eq:sdp}, can be found by a bisection-type algorithm.

\subsection{Moment-sum-of-squares hierarchy}\label{sec:msos}

In this section, we modify \eqref{eq:sdp} to be practically solvable to global optimality by the moment-sum-of-squares hierarchy, develop a sequence of feasible upper bounds, and settle a simple sufficiency condition of global $\varepsilon$-optimality in the spirit of \citep{Tyburec2021}.

\subsubsection{Polynomial programming reformulation}
For convergence guarantees of the moment-sum-of-squares hierarchy, we need to certify algebraic compactness of the feasible set, recall Assumption \ref{ass:archimedean} and Theorem \ref{th:convergence}. This can be secured by bounding the design variables through quadratic constraints \citep[Proposition 4]{Tyburec2021}.

To set these constraints, we first notice that while the lower bounds for $\mathbf{a}$ come directly from the problem formulation, recall \eqref{eq:nonneg}, the upper bounds can be established by exploiting the results in Section \ref{sec:upperbound}. In particular, for any fixed $\tilde{\mathbf{a}}>\mathbf{0}$, the condition in Proposition~\ref{prop:infima} is satisfied, allowing us to compute optimal scaling $\delta^*$ through the program \eqref{eq:uni}, and thus construct a~feasible upper-bound to \eqref{eq:sdp}. An upper-bound structural weight then amounts to
\begin{equation}\label{eq:ubweight}
\overline{w} = \delta^* \sum_{e=1}^{n_\mathrm{e}} \rho_e \ell_e \tilde{a}_e.
\end{equation}
Since \eqref{eq:ubweight} bounds the weight from above, none of the structural elements can exceed the weight $\overline{w}$ at the optimum. Therefore, the individual variables $a_e$ can be bounded as
\begin{equation}
	0 \le a_e \le \frac{\overline{w}}{\rho_e \ell_e}
\end{equation}
which is then equivalent to
\begin{equation}\label{eq:compactbound}
a_e \left( \frac{\overline{w}}{\rho_e \ell_e} - a_e\right)\ge 0.
\end{equation}

From the numerical perspective, it is further advantageous to scale the design variables, i.e., solve the optimization problem in terms of $\forall e \in \{1,\dots,n_{\mathrm{e}}\}:a_{\mathrm{s},e} \in [-1,1]$ rather than in $\forall e \in \{1,\dots,n_{\mathrm{e}}\}:a_e \in [0,\overline{w}/(\ell_e \rho_e)]$, which is achieved by inserting
\begin{equation}
a_e = \frac{a_{\mathrm{s},e}+1}{2} \frac{\overline{w}}{\rho_e \ell_e}.
\end{equation}
After these modifications, the final formulation reads as
\begin{subequations}\label{eq:finalformulation}
\begin{align}
\min_{\mathbf{a}_\mathrm{s}} \; & 0.5\overline{w}\left(n_\mathrm{e} + \mathbf{1}^\mathrm{T} \mathbf{a}_\mathrm{s}\right)\\
\mathrm{s.t.}\; & \begin{pmatrix}
\overline{c}_j & -\mathbf{f}_j^\mathrm{T}\\
-\mathbf{f}_j & \mathbf{K}_j(\mathbf{a}_\mathrm{s})
\end{pmatrix}\succeq 0, \quad\forall j \in \{1,\dots, n_\mathrm{lc}\},\\
& a_{\mathrm{s},e}^2 \le 1,\quad \forall j \in \{1,\dots, n_\mathrm{e}\}.
\end{align}
\end{subequations}

\subsubsection{Recovering feasible upper bounds and sufficient condition of global $\varepsilon$-optimality}

In order to solve \eqref{eq:finalformulation} globally, we generate a hierarchy of convex outer approximations of the feasible set $\mathcal{K}(\mathbf{G}(\mathbf{x}))$, recall Section \ref{sec:po}. The feasible set of these relaxations is described in terms of the moments $\mathbf{y}$ that are indexed in the polynomial space basis $\mathbf{b}_{2r}(\mathbf{a}_{\mathrm{sc}}) = \{1, a_{\mathrm{s},1},\dots, a_{\mathrm{s},n_\mathrm{e}},\dots \}$. Because the emerging relaxations are linear in $\mathbf{y}$, recall \eqref{eq:truncation}, we solve a sequence of convex linear semidefinite programming problems.

Let now $\mathbf{y}_{\mathbf{a}_\mathrm{s}^1}^{(r)}$ be the optimal first-order moments associated with degree-$1$ polynomials in $\mathbf{b}_{2r}(\mathbf{a_{\mathrm{sc}}})$ of the $r$-th degree relaxation. Unscaling these first-order moments provides us with an estimate on the optimal scaling factors $\tilde{\mathbf{a}}$, i.e., 
\begin{equation}\label{eq:atilde}
\tilde{a}_e = \frac{{y}_{a_{\mathrm{s},e}^1}^{(r)}+1}{2} \frac{\overline{w}}{\rho_e \ell_e}, \forall e \in \{1,\dots, n_\mathrm{e}\}.
\end{equation}
For $\tilde{\mathbf{a}}$, it holds that $\mathbf{f}_j \in \mathrm{Im}\left(\mathbf{K}_j(\tilde{\mathbf{a}})\right)$ by \cite[Proposition 6]{Tyburec2021}. Consequently, we show how to construct feasible upper bounds next.

\begin{theorem}\label{th:ubound}
	Let $\mathbf{y}_{\mathbf{a}_\mathrm{s}^1}^{(r)}$ be the optimal first-order moments associated with the $r$-th relaxation. If $\inf c_j (\delta)<\overline{c}_j$ holds for all $j \in \{1\dots n_\mathrm{lc}\}$ and the problem \eqref{eq:original} is solvable, then $\delta^* \tilde{\mathbf{a}}$, with $\tilde{\mathbf{a}}$ set as \eqref{eq:atilde} and $\delta^*$ computed based on Proposition \ref{prop:upperbound}, is a feasible (upper-bound) solution to \eqref{eq:original}.
\begin{proof}
	Same as in Proposition \ref{prop:upperbound}.
\end{proof}
\end{theorem}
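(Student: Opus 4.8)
The plan is to reduce the statement directly to Proposition~\ref{prop:upperbound}, since the only distinction between the two is that here the scaling direction $\tilde{\mathbf{a}}$ is no longer an arbitrary admissible ratio vector but is pinned down by the relaxed moments through \eqref{eq:atilde}. Thus the entire task amounts to checking that this particular $\tilde{\mathbf{a}}$ satisfies the hypotheses of Proposition~\ref{prop:upperbound}, and then letting that proposition supply both the scaling factor and the feasibility conclusion.

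First I would verify that $\tilde{\mathbf{a}}$ is admissible as a vector of fixed ratios. Because the scaled variables obey $a_{\mathrm{s},e}\in[-1,1]$, their first-order moments satisfy $y_{a_{\mathrm{s},e}^1}^{(r)}\in[-1,1]$, so \eqref{eq:atilde} yields $\tilde{a}_e\in[0,\overline{w}/(\rho_e\ell_e)]$ and in particular $\tilde{\mathbf{a}}\ge\mathbf{0}$. The \emph{essential} point is then the image condition $\mathbf{f}_j\in\mathrm{Im}(\mathbf{K}_j(\tilde{\mathbf{a}}))$ for every load case, which is exactly what qualifies $\tilde{\mathbf{a}}$ as a legitimate scaling direction for the construction of Section~\ref{sec:upperbound}; this is furnished by \cite[Proposition~6]{Tyburec2021}, as recorded just above the statement, so I would invoke it rather than reprove it. With admissibility in hand, the remaining hypotheses of Proposition~\ref{prop:upperbound} are inherited from those of the theorem: the infimum condition $\inf_\delta c_j(\delta)<\overline{c}_j$ for all $j$ is assumed outright, while solvability of \eqref{eq:original} together with that strict inequality supplies the non-empty-interior requirement. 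Proposition~\ref{prop:upperbound} then delivers a minimizer $\delta^*>0$ of the univariate program \eqref{eq:uni} for which $\delta^*\tilde{\mathbf{a}}$ satisfies \eqref{eq:compl_uni} and $\delta^*\tilde{\mathbf{a}}\ge\mathbf{0}$; by the equivalence of \eqref{eq:sdp} and \eqref{eq:original} from Section~\ref{sec:optproblem}, this point is feasible for \eqref{eq:original} and its weight bounds the optimum from above.

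I expect the single load-bearing step to be the image condition $\mathbf{f}_j\in\mathrm{Im}(\mathbf{K}_j(\tilde{\mathbf{a}}))$; everything else is either bookkeeping on the scaling or a direct appeal to Proposition~\ref{prop:upperbound}. Since $\tilde{\mathbf{a}}$ is read off a relaxed moment vector rather than an actual feasible design, there is no a~priori guarantee that the associated stiffness operator can transmit the loads, and it is precisely \cite[Proposition~6]{Tyburec2021} that excludes this degeneracy. Were it to fail, $c_j(\delta)$ could be infinite for every $\delta$ and the projection \eqref{eq:uni} would be infeasible, so this is the point at which the argument would break down in the absence of the cited result.
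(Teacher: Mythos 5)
Your proposal is correct and follows essentially the same route as the paper: the paper's proof is literally ``Same as in Proposition~\ref{prop:upperbound}'', with the key image condition $\mathbf{f}_j\in\mathrm{Im}(\mathbf{K}_j(\tilde{\mathbf{a}}))$ supplied by the citation of \citet[Proposition~6]{Tyburec2021} in the sentence immediately preceding the theorem, exactly as you invoke it. You have merely made explicit the admissibility bookkeeping that the paper leaves implicit.
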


\begin{remark}\label{rem:upp0}
	Due to $\mathbf{f}_j \in \mathrm{Im}\left(\mathbf{K}_j(\tilde{\mathbf{a}})\right)$, the assumptions of Theorem \ref{th:ubound} may lack to be satisfied only if $\mathbf{K}_{j,0}\neq \mathbf{0}$ and $\exists e: \tilde{a}_e=0$. For $\mathbf{K}_{j,0}=\mathbf{0}$, the upper bounds can always be constructed.
\end{remark}

Having the sequence of upper bounds in Theorem \ref{th:ubound}, and a natural sequence of lower bounds from the relaxations, we arrive at a simple condition of global $\varepsilon$-optimality.

\begin{proposition}\label{prop:sufficient}
	Let $\delta^* \tilde{\mathbf{a}}$ be a feasible (upper-bound) solution to \eqref{eq:original} constructed based on Theorem \ref{th:ubound}. Then,
	\begin{equation}
	\left(\delta^*-1\right)0.5 \overline{w}(n_\mathrm{e} + \mathbf{1}^\mathrm{T} \mathbf{y}_{\mathbf{a}_\mathrm{s}^1}^{(r)}) \le \varepsilon
	\end{equation}
	is a sufficient condition of global $\varepsilon$-optimality.
\end{proposition}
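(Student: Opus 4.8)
The plan is to show that the left-hand side of the stated inequality is exactly an upper bound on the optimality gap $w_{\mathrm{ub}} - f^*$, where $w_{\mathrm{ub}}$ denotes the weight of the feasible design $\delta^* \tilde{\mathbf{a}}$ and $f^*$ is the global optimum of \eqref{eq:original} (equivalently, of the reformulation \eqref{eq:finalformulation}). Once this is established, the condition of the proposition immediately forces the gap below $\varepsilon$. The whole argument reduces to chaining the lower bound supplied by the relaxation with the feasible upper bound of Theorem \ref{th:ubound}, and the key is that the \emph{same} linear expression plays both roles up to the scalar factor $\delta^*$.

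First I would identify the lower bound. Because the objective of \eqref{eq:finalformulation} is linear in $\mathbf{a}_\mathrm{s}$, namely $0.5\overline{w}(n_\mathrm{e} + \mathbf{1}^\mathrm{T}\mathbf{a}_\mathrm{s})$, its moment representation $\mathbf{q}_0^\mathrm{T}\mathbf{y}$ depends only on the zeroth moment $y_0 = 1$ and the first-order moments. Hence the optimal value of the $r$-th relaxation is exactly
\[
f^{(r)} = 0.5\overline{w}\bigl(n_\mathrm{e} + \mathbf{1}^\mathrm{T}\mathbf{y}_{\mathbf{a}_\mathrm{s}^1}^{(r)}\bigr),
\]
and by Theorem \ref{th:convergence} this is a valid lower bound, $f^{(r)} \le f^*$. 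Next I would compute the weight of the feasible design. Substituting \eqref{eq:atilde} into the weight functional of \eqref{eq:original} evaluated at $\delta^*\tilde{\mathbf{a}}$, the element factors $\rho_e\ell_e$ cancel against the denominator in \eqref{eq:atilde}, giving
\[
w_{\mathrm{ub}} = \sum_{e=1}^{n_\mathrm{e}}\rho_e\ell_e\,\delta^*\tilde{a}_e = \delta^*\cdot 0.5\overline{w}\bigl(n_\mathrm{e} + \mathbf{1}^\mathrm{T}\mathbf{y}_{\mathbf{a}_\mathrm{s}^1}^{(r)}\bigr) = \delta^* f^{(r)}.
\]

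Finally I would combine the two. By Theorem \ref{th:ubound} the design $\delta^*\tilde{\mathbf{a}}$ is feasible for \eqref{eq:original}, so $w_{\mathrm{ub}} \ge f^*$; together with $f^{(r)} \le f^*$ this yields
\[
0 \le w_{\mathrm{ub}} - f^* \le w_{\mathrm{ub}} - f^{(r)} = (\delta^*-1)\cdot 0.5\overline{w}\bigl(n_\mathrm{e} + \mathbf{1}^\mathrm{T}\mathbf{y}_{\mathbf{a}_\mathrm{s}^1}^{(r)}\bigr),
\]
so whenever the right-hand side is at most $\varepsilon$, the feasible design $\delta^*\tilde{\mathbf{a}}$ is globally $\varepsilon$-optimal, which is the claim. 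There is essentially no hard step here, as the proof is a direct substitution; the only point requiring care is the first step, namely recognizing that the linearity of the objective makes the relaxation value coincide with the objective read off the optimal first-order moments, so that one and the same quantity $0.5\overline{w}(n_\mathrm{e}+\mathbf{1}^\mathrm{T}\mathbf{y}_{\mathbf{a}_\mathrm{s}^1}^{(r)})$ serves simultaneously as the lower bound $f^{(r)}$ and, up to the factor $\delta^*$, as the feasible upper bound $w_{\mathrm{ub}}$.
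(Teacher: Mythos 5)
Your argument is correct and is precisely the reasoning the paper intends (the paper states this proposition without an explicit proof, presenting it as the immediate consequence of sandwiching $f^*$ between the relaxation lower bound $f^{(r)}=0.5\overline{w}(n_\mathrm{e}+\mathbf{1}^\mathrm{T}\mathbf{y}_{\mathbf{a}_\mathrm{s}^1}^{(r)})$ and the feasible upper bound $\delta^* f^{(r)}$). Your observation that the linearity of the objective makes the same first-order-moment expression serve as both bounds, up to the factor $\delta^*$, is exactly the point.
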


Finally, we also show that $\varepsilon\rightarrow 0$ for problems with minimizers forming a convex set.

\begin{theorem}
	Let $\delta^* \tilde{\mathbf{a}}$ be a feasible (upper-bound) solution to \eqref{eq:original} constructed based on Theorem \ref{th:ubound}. If the set of global minimizers is convex, then, as $r\rightarrow \infty$, 
	\begin{equation}
	\left(\delta^*-1\right)0.5 \overline{w}(n_\mathrm{e} + \mathbf{1}^\mathrm{T} \mathbf{y}_{\mathbf{a}_\mathrm{s}^1}^{(r)}) = 0.
	\end{equation}
\begin{proof}
	Because of Theorem \ref{th:convergence} and satisfied Assumption \ref{ass:archimedean}, for $r\rightarrow \infty$, optimization over a set $\mathcal{K}$ is equivalent to optimization over its convex hull $\text{Conv}(\mathcal{K})$ \citep[Proposition 7]{Tyburec2021}. By Assumption \ref{ass:archimedean}, $\mathcal{K}$ is compact, and, thus, $\text{Conv}(\mathcal{K})$ is too. Therefore, we can express the convex hull using its limits points $\mathbf{d}_1, \mathbf{d}_2,\dots$,
	\begin{equation}
	\text{Conv}\left(\mathcal{K}\right) = \text{Conv}\left( \cup_{i=1}^\infty \left\{ \mathbf{d}_i \right\} \right).
	\end{equation}
	Having assumed that the set of global minimizers is convex, there must exist a convex set $\text{Conv}\left( \cup_{i=1}^\infty \left\{ \mathbf{d}^*_i \right\} \right) \subseteq \text{Conv}\left(\mathcal{K}\right)$ with points $\mathbf{d}_i^*$ that are associated with the minimum. 
\end{proof}
\end{theorem}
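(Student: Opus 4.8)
The plan is to read the displayed left-hand side as the gap between the feasible upper-bound weight and the relaxation's lower bound. Unscaling through \eqref{eq:atilde} gives $\sum_{e} \rho_e \ell_e \tilde{a}_e = 0.5\overline{w}(n_\mathrm{e} + \mathbf{1}^\mathrm{T} \mathbf{y}_{\mathbf{a}_\mathrm{s}^1}^{(r)})$, which is precisely the relaxation objective $f^{(r)}$, while the weight of the feasible design $\delta^* \tilde{\mathbf{a}}$ equals $\delta^* f^{(r)}$. Hence the quantity in question is the difference $(\delta^*-1)\,f^{(r)}$. By Theorem~\ref{th:convergence} the lower bound satisfies $f^{(r)} \nearrow f^*$, so the factor $f^{(r)}$ converges and in particular stays bounded; it therefore suffices to prove that $\delta^* \to 1$ as $r \to \infty$.

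First I would invoke \citep[Proposition 7]{Tyburec2021} together with Assumption~\ref{ass:archimedean} and Theorem~\ref{th:convergence}: in the limit the hierarchy minimizes the linear objective over $\mathrm{Conv}(\mathcal{K})$, and by compactness of the feasible moment set the optimal first-order moments $\mathbf{y}_{\mathbf{a}_\mathrm{s}^1}^{(r)}$ converge (along a subsequence) to the barycenter $\int \mathbf{a}_\mathrm{s}\,\mathrm{d}\mu^\star$ of a probability measure $\mu^\star$ supported on $\mathcal{K}$ with $\int f\,\mathrm{d}\mu^\star = f^*$. Since $f \ge f^*$ on $\mathcal{K}$ with equality exactly on the set of global minimizers $\mathcal{A}^\star$, the measure $\mu^\star$ must be supported on $\mathcal{A}^\star$.

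The decisive step exploits the convexity hypothesis. As $\mathcal{A}^\star$ is convex and compact (a closed subset of the compact $\mathcal{K}$), the barycenter of any probability measure supported on $\mathcal{A}^\star$ lies in $\mathcal{A}^\star$ itself, not merely in its convex hull; this is exactly where $\text{Conv}(\cup_i\{\mathbf{d}_i^*\})$ reduces to a single minimizing face. Transporting this back through the affine unscaling \eqref{eq:atilde} (which preserves convexity), the limiting $\tilde{\mathbf{a}}$ is a global minimizer of \eqref{eq:original}. This is the analogue of the uniqueness argument of \citep{Tyburec2021}, now relaxed to the convex-minimizer-set case, and it is the part I expect to require the most care: making the weak-$\star$ limit of the moment vectors rigorous and pinning $\mathrm{supp}(\mu^\star)$ onto $\mathcal{A}^\star$.

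Finally I would conclude $\delta^* = 1$. Because $\tilde{\mathbf{a}}$ is feasible, $\delta = 1$ is admissible in \eqref{eq:uni} (the compliances are monotone by Proposition~\ref{prop:quasiconvex}), so $\delta^* \le 1$. Conversely, the univariate minimizer $\delta^*\tilde{\mathbf{a}}$ is feasible for \eqref{eq:original} with weight $\delta^* f^*$; since $f^*$ is the optimal value, $\delta^* f^* \ge f^*$ forces $\delta^* \ge 1$, whence $\delta^* = 1$. Substituting $\delta^* \to 1$ into the bounded factor $(\delta^*-1)\,f^{(r)}$ yields the claimed limit of zero.
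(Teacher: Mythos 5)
Your proposal follows essentially the same route as the paper: both pass to the limit in which the hierarchy optimizes the linear weight objective over $\mathrm{Conv}(\mathcal{K})$ and then use the convexity of the set of global minimizers to place the recovered first-order-moment design inside that set rather than merely in its convex hull. In fact your write-up is more complete than the paper's own proof, which stops after exhibiting the convex set $\mathrm{Conv}\left(\cup_{i}\{\mathbf{d}_i^*\}\right)$ of minimizing limit points and leaves implicit the barycenter argument, the two-sided deduction that $\delta^*=1$, and the identification of the gap as $(\delta^*-1)f^{(r)}$ with $f^{(r)}$ bounded --- precisely the steps you supply.
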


\section{Examples}\label{sec:examples}

In this section, we demonstrate the capabilities of the presented method by means of two illustrations: a modular three-story structure, and a part design. All computations were performed on a personal laptop with $24$~GB of RAM and Intel$^\text{\textregistered}$ Core$^\text{\texttrademark}$ i5-8350U CPU. For optimization, we relied on the \textsc{Mosek}~\citep{Mosek} solver.

\subsection{24-element modular frame structure}
\begin{figure*}[!t]
	\begin{subfigure}[b]{0.32\linewidth}
		\begin{tikzpicture}
		\scaling{2.2}
		\point{a}{0.00}{0.0}; \notation{1}{a}{$\circled{a}$}[above left];
		\point{b}{1.50}{0.0}; \notation{1}{b}{$\circled{b}$}[above right];
		\point{c}{0.00}{1.0}; \notation{1}{c}{$\circled{c}$}[above left];
		\point{d}{1.50}{1.0}; \notation{1}{d}{$\circled{d}$}[above right];
		\point{e}{0.00}{2.0}; \notation{1}{e}{$\circled{e}$}[above left];
		\point{f}{1.50}{2.0}; \notation{1}{f}{$\circled{f}$}[above right];
		\point{g}{0.00}{3.0}; \notation{1}{g}{$\circled{g}$}[above left];
		\point{h}{1.50}{3.0}; \notation{1}{h}{$\circled{h}$}[above right];
		\point{i}{0.75}{1.0}; \notation{1}{i}{$\circled{i}$}[below=-0.5mm];
		\point{j}{0.75}{2.0}; \notation{1}{j}{$\circled{j}$}[below=-0.5mm];
		\point{k}{0.75}{3.0}; \notation{1}{k}{$\circled{k}$}[below=-0.5mm];
		\point{l}{0.75}{0.5}; \notation{1}{l}{$\circled{l}$}[above];
		\point{m}{0.75}{1.5}; \notation{1}{m}{$\circled{m}$}[above];
		\point{n}{0.75}{2.5}; \notation{1}{n}{$\circled{n}$}[above];
		\point{o}{0.75}{0.0};
		
		\beam{2}{a}{c}; \notation{4}{a}{c}[$1$];
		\beam{2}{b}{d}; \notation{4}{b}{d}[$2$];
		\beam{2}{c}{e}; \notation{4}{c}{e}[$3$];
		\beam{2}{d}{f}; \notation{4}{d}{f}[$4$];
		\beam{2}{e}{g}; \notation{4}{e}{g}[$5$];
		\beam{2}{f}{h}; \notation{4}{f}{h}[$6$];
		\beam{2}{c}{i}; \notation{4}{c}{i}[$7$];
		\beam{2}{i}{d}; \notation{4}{i}{d}[$8$];
		\beam{2}{e}{j}; \notation{4}{e}{j}[$9$];
		\beam{2}{j}{f}; \notation{4}{j}{f}[$10$];
		\beam{2}{g}{k}; \notation{4}{g}{k}[$11$];
		\beam{2}{k}{h}; \notation{4}{k}{h}[$12$];
		\beam{2}{a}{l}; \notation{4}{a}{l}[$13$];
		\beam{2}{l}{d}; \notation{4}{l}{d}[$14$];
		\beam{2}{b}{l}; \notation{4}{b}{l}[$15$];
		\beam{2}{l}{c}; \notation{4}{l}{c}[$16$];
		\beam{2}{c}{m}; \notation{4}{c}{m}[$17$];
		\beam{2}{m}{f}; \notation{4}{m}{f}[$18$];
		\beam{2}{d}{m}; \notation{4}{d}{m}[$19$];
		\beam{2}{m}{e}; \notation{4}{m}{e}[$20$];
		\beam{2}{e}{n}; \notation{4}{e}{n}[$21$];
		\beam{2}{n}{h}; \notation{4}{n}{h}[$22$];
		\beam{2}{f}{n}; \notation{4}{f}{n}[$23$];
		\beam{2}{n}{g}; \notation{4}{n}{g}[$24$];
		
		\support{3}{a};
		\support{3}{b};
		
		\load{1}{i}[90][0.7][0.0]; \notation{1}{i}{$1$}[above=3mm,xshift=1.5mm];
		\load{1}{j}[90][0.7][0.0]; \notation{1}{j}{$1$}[above=3mm,xshift=1.5mm];
		\load{1}{k}[90][0.7][0.0]; \notation{1}{k}{$1$}[above=3mm,xshift=1.5mm];
		\load{1}{c}[180][0.7][0.0]; \notation{1}{c}{$1$}[below=0mm, xshift=-3mm];
		\load{1}{e}[180][0.7][0.0]; \notation{1}{e}{$1$}[below=0mm, xshift=-3mm];
		\load{1}{g}[180][0.35][0.0]; \notation{1}{g}{$0.5$}[below=0mm, xshift=-3mm];
		
		\dimensioning{2}{o}{l}{4.25}[$0.5$];
		\dimensioning{2}{l}{i}{4.25}[$0.5$];
		\dimensioning{2}{i}{m}{4.25}[$0.5$];
		\dimensioning{2}{m}{j}{4.25}[$0.5$];
		\dimensioning{2}{j}{n}{4.25}[$0.5$];
		\dimensioning{2}{n}{k}{4.25}[$0.5$];
		\dimensioning{1}{a}{o}{-0.75}[$0.75$];
		\dimensioning{1}{o}{b}{-0.75}[$0.75$];
		\end{tikzpicture}
		\caption{}
	\end{subfigure}%
	\hfill\begin{subfigure}[b]{0.15\linewidth}
		\centering
		Elements \squared{1}--\squared{6}:\\
		\begin{tikzpicture}
		\scaling{1.0}
		\point{a}{0.0}{0.0};\point{a0}{1.0}{0.0};
		\point{b}{0.0}{0.1};\point{b0}{1.0}{0.1};
		\point{c}{0.45}{0.1};\point{c0}{0.55}{0.1};
		\point{d}{0.45}{0.9};\point{d0}{0.55}{0.9};
		\point{e}{0.0}{0.9};\point{e0}{1.0}{0.9};
		\point{f}{0.0}{1.0};\point{f0}{1.0}{1.0};
		\draw[black, thick, fill=black!25] (a0) -- (b0) -- (c0) -- (d0) -- (e0) -- (f0) -- (f) -- (e) -- (d) -- (c) -- (b) -- (a) -- cycle;
		\dimensioning{2}{a}{f}{1.25}[$10t_e$];
		\dimensioning{1}{a}{a0}{1.25}[$10t_e$];
		\draw [-stealth](0.25,0.60) -- (0.5,0.75);
		\draw [-stealth](0.25,0.60) -- (0.35,0.95);
		\draw [-stealth](0.25,0.60) -- (0.35,0.05);
		\draw (-0.2,0.6) -- node[above=-0.5mm]{$t_e$}(0.025,0.6) -- (0.25,0.60);
		\end{tikzpicture}\\
		\vspace{3mm}
		Elements \squared{13}--\squared{24}:\\
		\begin{tikzpicture}
		\point{a}{-1.0}{0};\point{a0}{0.0}{0.0};
		\draw[thick, fill=black!25] (0.0,0) arc (0:360:0.5);
		\draw[thick, fill=white] (-0.1,0) arc (0:360:0.4);
		\dimensioning{1}{a}{a0}{0.65}[$10t_e$];
		\draw [-stealth](-0.3,-0.1) -- (-0.05,0.05);
		\draw (-0.7,-0.1) -- node[above=-0.5mm]{$t_e$}(-0.4,-0.1) -- (-0.3,-0.1);
		\end{tikzpicture}\\
		\vspace{3mm}
		Elements \squared{7}--\squared{12}:\\
		\begin{tikzpicture}
		\scaling{1.0}
		\point{a}{0.0}{0.0};\point{a0}{1.0}{0.0};
		\point{b}{0.0}{0.1};\point{b0}{1.0}{0.1};
		\point{c}{0.45}{0.1};\point{c0}{0.55}{0.1};
		\point{d}{0.45}{1.9};\point{d0}{0.55}{1.9};
		\point{e}{0.0}{1.9};\point{e0}{1.0}{1.9};
		\point{f}{0.0}{2.0};\point{f0}{1.0}{2.0};
		\draw[black, thick, fill=black!25] (a0) -- (b0) -- (c0) -- (d0) -- (e0) -- (f0) -- (f) -- (e) -- (d) -- (c) -- (b) -- (a) -- cycle;
		\dimensioning{2}{a}{f}{1.25}[$20t_e$];
		\dimensioning{1}{a}{a0}{2.25}[$10t_e$];
		\draw [-stealth](0.25,0.60) -- (0.5,0.75);
		\draw [-stealth](0.25,0.60) -- (0.35,1.95);
		\draw [-stealth](0.25,0.60) -- (0.35,0.05);
		\draw (-0.2,0.6) -- node[above=-0.5mm]{$t_e$}(0.025,0.6) -- (0.25,0.60);
		\end{tikzpicture}
		\caption{}
	\end{subfigure}%
	\hfill\begin{subfigure}[b]{0.5\linewidth}
		\begin{tikzpicture}	
		\begin{axis}[xmin=0.9, xmax=3.1, ymin=0.035, ymax=0.24, axis lines=center, xlabel=$r$, ylabel=$w$, every axis y label/.style={at=(current axis.above origin),anchor=south}, every axis x label/.style={at=(current axis.right of origin),anchor=west},yticklabel style={/pgf/number format/fixed, /pgf/number format/precision=2},xtick={1,...,3},width=\textwidth,height=9.2cm,legend pos=south east,]
		\addplot[color=black,mark=x] coordinates {
			(1, 0.046850)
			(2, 0.101032)
			(3, 0.114697)
		};
		\addlegendentry{lower bounds};
		\addplot+[color=black,dashed,mark=x] coordinates {
			(1, 0.147282)
			(2, 0.123123)
			(3, 0.115027)
		};
		\addlegendentry{upper bounds};
		\node at (rel axis cs:0.135,0.77) {\includegraphics[height=3.3cm]{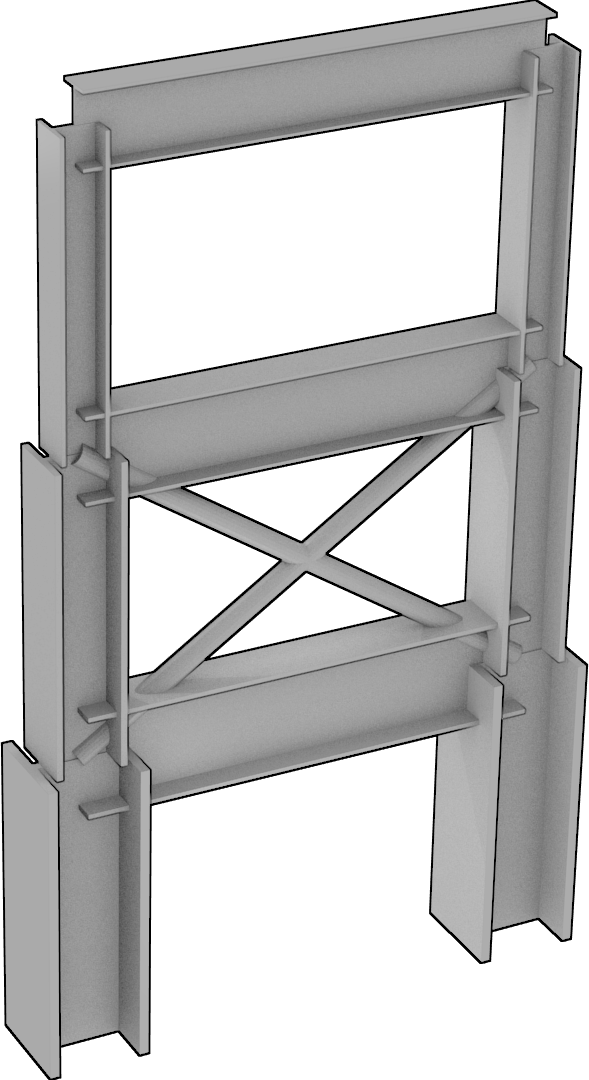}};
		\node at (rel axis cs:0.5,0.7) {\includegraphics[height=3.3cm]{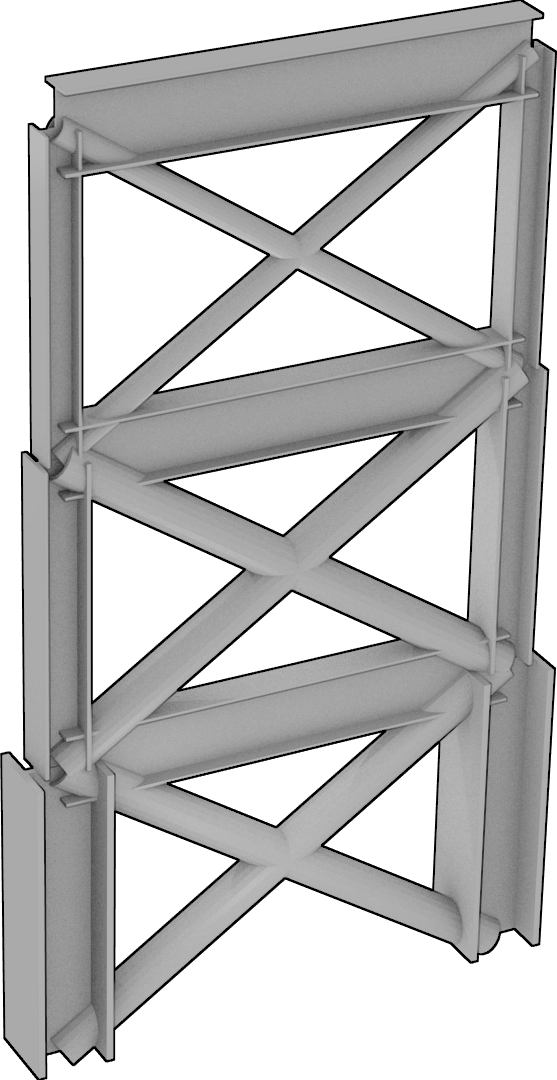}};
		\node at (rel axis cs:0.88,0.65) {\includegraphics[height=3.3cm]{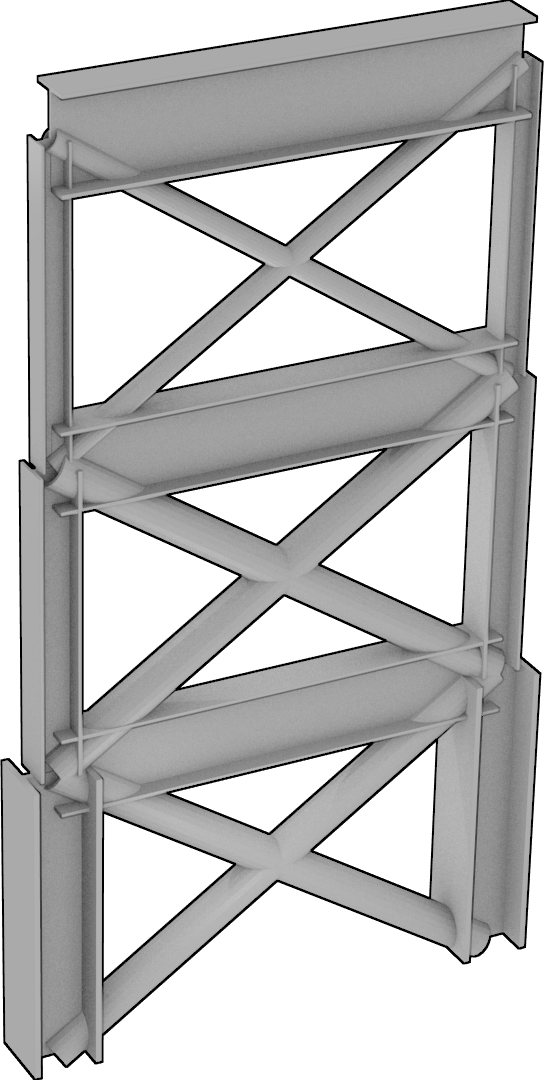}};
		\end{axis}
		\end{tikzpicture}
		\caption{}
	\end{subfigure}
	\caption{$24$-element frame structure: (a) boundary conditions, (b) cross-section parametrization, and (c) convergence of the proposed relaxation-based approach with visualized feasible upper-bound designs.}
	\label{fig:frame24}
\end{figure*}

As the first illustration we investigate a modular frame structure containing $24$ Euler-Bernoulli finite elements and $36$ degrees of freedom, see Fig.~\ref{fig:frame24}a. For simplicity, we assume the dimensionless Young modulus $E=1.0$, density $\rho=1.0$, as well as the dimensionless structural dimensions.

The frame structure is clamped at the bottom nodes $\circled{a}$ and $\circled{b}$, and subjected to horizontal loads at nodes $\circled{c}$, $\circled{e}$ and $\circled{g}$, and to vertical forces acting at $\circled{i}$, $\circled{j}$ and $\circled{k}$.

We split the structural elements into several groups: first, we use a different cross-section parametrization within the structural elements, Fig.~\ref{fig:frame24}b. In addition, we also specify groups of elements that must have the same cross-section size to maintain structural symmetry: while for the columns, we set $a_1=a_2$, $a_3=a_4$, $a_5=a_6$, we require $a_7=a_8$, $a_9=a_{10}$ and $a_{11}=a_{12}$ for the horizontal beams. Finally, we enforce equal cross-section sizes within the circular tubes in a single story, i.e., $a_{13}=a_{14}=a_{15}=a_{16}$, $a_{17}=a_{18}=a_{19}=a_{20}$, and $a_{21}=a_{22}=a_{23}=a_{24}$. Hence, we have nine independent cross-section areas in total.

\begin{table}[!b]
	\centering
	\begin{tabular}{lrrrrr}
		$r$ & LB & UB & Time [s] & $n_\mathrm{c}\times m$ & $n$\\
		\hline
		$1$ & $0.047$ & $0.147$ & $0.05$ & $10$, $9\times 1$, $37$ & $54$ \\
		$2$ & $0.101$ & $0.123$ & $14.24$ & $55$, $9\times 10$, $370$ & $714$ \\
		$3$ & $0.115$ & $0.115$ & $13\thinspace473.50$ & $220$, $9\times 55$, $2\thinspace035$ & $5\thinspace004$
	\end{tabular}
	\caption{$24$-element frame structure optimization. LB abbreviates lower bound, UB stands for feasible upper bounds, and $r$ is the relaxation number. Further, $n_\mathrm{c}\times m$ denotes the number of $n_\mathrm{c}$ semidefinite constraints of the size $m$, and $n$ is the number of variables.}
	\label{tab:frame24}
\end{table}

Because all structural elements are being optimized, the term $\mathbf{K}_{0}$ in \eqref{eq:polydep} is empty. Based on Remark \ref{rem:inf0}, we thus have $\inf_{\mathbf{a}} c = 0$. As any positive compliance can thus be attained, we set the compliance upper-bound to $\overline{c}=5\thinspace000$. Starting with a uniform distribution of cross-section areas, $\tilde{\mathbf{a}}=\mathbf{1}$, the optimization problem \eqref{eq:uni} yields the optimal scaling factor $\delta=6.64 \times 10^{-3}$, which provides an upper-bound weight of $\overline{w} = 0.141$. 

Moreover, for any feasible first-order moments, Remark \ref{rem:upp0} assures us that feasible upper bounds can always be constructed. In the lowest, first-degree relaxation, we receive the lower-bound weight of $0.047$. Using the cross-section area distribution provided by the optimal first-order moments to construct a feasible upper bound, recall Theorem \ref{th:ubound}, we receive the weight $0.147$. In the second relaxation, we obtain the lower bound objective $0.101$, from which we recover an upper-bound weight $0.123$. Final, third relaxation yields a lower-bound weight of $0.115$, and the projected upper-bound design of weight $0.115$, see Table~\ref{tab:frame24}. Hence, the global optimality of the design is certified based on Proposition \ref{prop:sufficient}. Similarly, the hierarchy also converged based on the flat extension theorem of \citet{Curto1996}, allowing for extracting the unique global minimizer visualized in Fig.~\ref{fig:frame24}c.

\subsection{Part design}

\begin{figure*}[!t]
	\begin{subfigure}[b]{0.5\linewidth}
		\begin{tikzpicture}
			\scaling{2.2}
			\point{a}{0.0}{0.0}; \notation{1}{a}{$\circled{a}$}[below right];
			\point{b}{1.0}{1.0}; \notation{1}{b}{$\circled{b}$}[below right];
			\point{c}{2.0}{1.0}; \notation{1}{c}{$\circled{c}$}[below];
			\point{d}{3.0}{1.0}; \notation{1}{d}{$\circled{d}$}[below left];
			\point{e}{0.667}{1.333}; \notation{1}{e}{$\circled{e}$}[left];
			\point{f}{1.5}{1.5}; \notation{1}{f}{$\circled{f}$}[below];
			\point{g}{2.5}{1.5}; \notation{1}{g}{$\circled{g}$}[below];
			\point{h}{0.0}{2.0}; \notation{1}{h}{$\circled{h}$}[above right];
			\point{i}{1.0}{2.0}; \notation{1}{i}{$\circled{i}$}[above left];
			\point{j}{2.0}{2.0}; \notation{1}{j}{$\circled{j}$}[above left];
			\point{k}{3.0}{2.0}; \notation{1}{k}{$\circled{k}$}[above left];
			
			\beam{3}{a}{b}; \notation{4}{a}{b}[$1$];
			\beam{3}{a}{e}; \notation{4}{a}{e}[$2$];
			\beam{3}{b}{e}; \notation{4}{b}{e}[$3$];
			\beam{3}{b}{i}; \notation{4}{b}{i}[$4$];
			\beam{2}{b}{f}; \notation{4}{b}{f}[$5$];
			\beam{2}{b}{c}; \notation{4}{b}{c}[$6$];
			\beam{2}{c}{f}; \notation{4}{c}{f}[$7$];
			\beam{2}{c}{j}; \notation{4}{c}{j}[$8$];
			\beam{2}{c}{g}; \notation{4}{c}{g}[$9$];
			\beam{2}{c}{d}; \notation{4}{c}{d}[$10$];
			\beam{2}{d}{g}; \notation{4}{d}{g}[$11$];
			\beam{3}{e}{h}; \notation{4}{e}{h}[$12$];
			\beam{3}{e}{i}; \notation{4}{e}{i}[$13$];
			\beam{2}{f}{i}; \notation{4}{f}{i}[$14$];
			\beam{2}{f}{j}; \notation{4}{f}{j}[$15$];
			\beam{2}{g}{j}; \notation{4}{g}{j}[$16$];
			\beam{2}{g}{k}; \notation{4}{g}{k}[$17$];
			\beam{3}{h}{i}; \notation{4}{h}{i}[$18$];
			\beam{2}{i}{j}; \notation{4}{i}{j}[$19$];
			\beam{2}{j}{k}; \notation{4}{j}{k}[$20$];
			
			\support{3}{a}[-90];
			\support{3}{h}[-90];
			\support{4}{d}[90];
			\support{4}{k}[90];
			
			\load{1}{i}[90][0.7][0.0]; \notation{1}{i}{$1$}[above=3mm,xshift=1.5mm];
			\load{1}{j}[90][0.7][0.0]; \notation{1}{j}{$1$}[above=3mm,xshift=1.5mm];
			\load{1}{k}[90][0.35][0.0]; \notation{1}{k}{$0.5$}[above=5.5mm,xshift=0mm];
			
			\dimensioning{1}{h}{e}{5.5}[$2/3$];
			\dimensioning{1}{e}{i}{5.5}[$1/3$];
			\dimensioning{1}{i}{f}{5.5}[$1/2$];
			\dimensioning{1}{f}{j}{5.5}[$1/2$];
			\dimensioning{1}{j}{g}{5.5}[$1/2$];
			\dimensioning{1}{g}{k}{5.5}[$1/2$];
			\dimensioning{2}{a}{b}{-0.75}[$1$];
			\dimensioning{2}{b}{e}{-0.75}[$1/3$];
			\dimensioning{2}{e}{f}{-0.75}[$1/6$];
			\dimensioning{2}{f}{h}{-0.75}[$1/2$];
			
			\point{aa}{1.8}{0.0}; \point{bb}{2.3}{0.5};
			\point{cc}{1.8}{0.5}; \point{dd}{2.3}{0.0};
			\draw[black, thick, fill=black!25] (aa) -- (cc) -- (bb) -- (dd) -- cycle;
			\dimensioning{1}{aa}{dd}{-0.45}[$b_e$];
			\dimensioning{2}{dd}{bb}{3.55}[$b_e$];
		\end{tikzpicture}
		\caption{}
	\end{subfigure}%
	\hfill\begin{subfigure}[b]{0.48\linewidth}
		\begin{tikzpicture}	
			\begin{axis}[xmin=0.9, xmax=2.1, ymin=0.100, ymax=0.26, axis lines=center, xlabel=$r$, ylabel=$w$, every axis y label/.style={at=(current axis.above origin),anchor=south}, every axis x label/.style={at=(current axis.right of origin),anchor=west},yticklabel style={/pgf/number format/fixed, /pgf/number format/precision=2},xtick={1,2},width=\textwidth,height=7.5cm,legend pos=south east,]
				\addplot[color=black,mark=x] coordinates {
					(1, 0.133097)
					(2, 0.169563)
				};
				\addlegendentry{lower bounds};
				\addplot+[color=black,dashed,mark=x] coordinates {
					(1, 0.181292)
					(2, 0.169563)
				};
				\addlegendentry{upper bounds};
				\node at (rel axis cs:0.27,0.77) {\includegraphics[height=2.5cm]{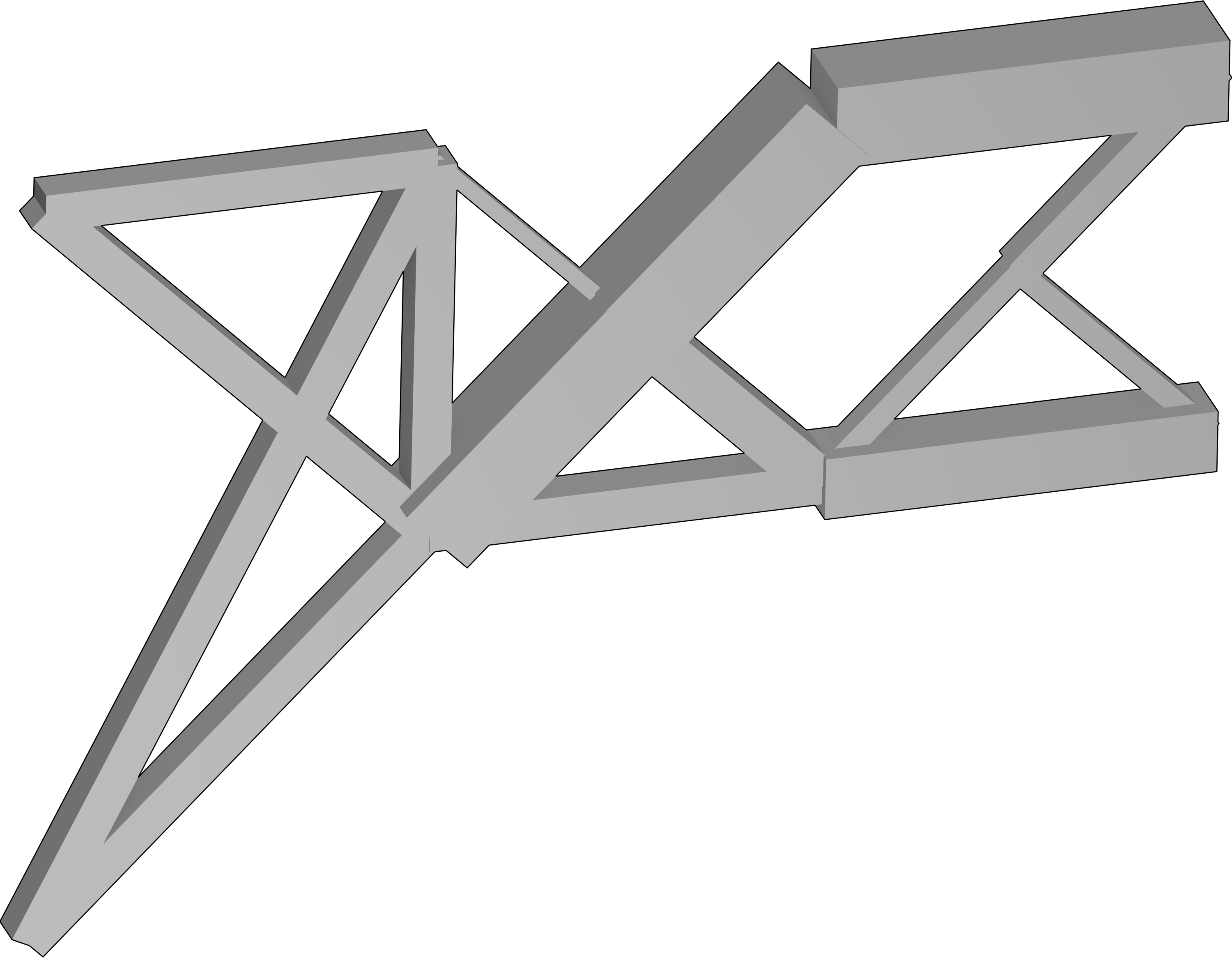}};
				\node at (rel axis cs:0.72,0.73) {\includegraphics[height=2.5cm]{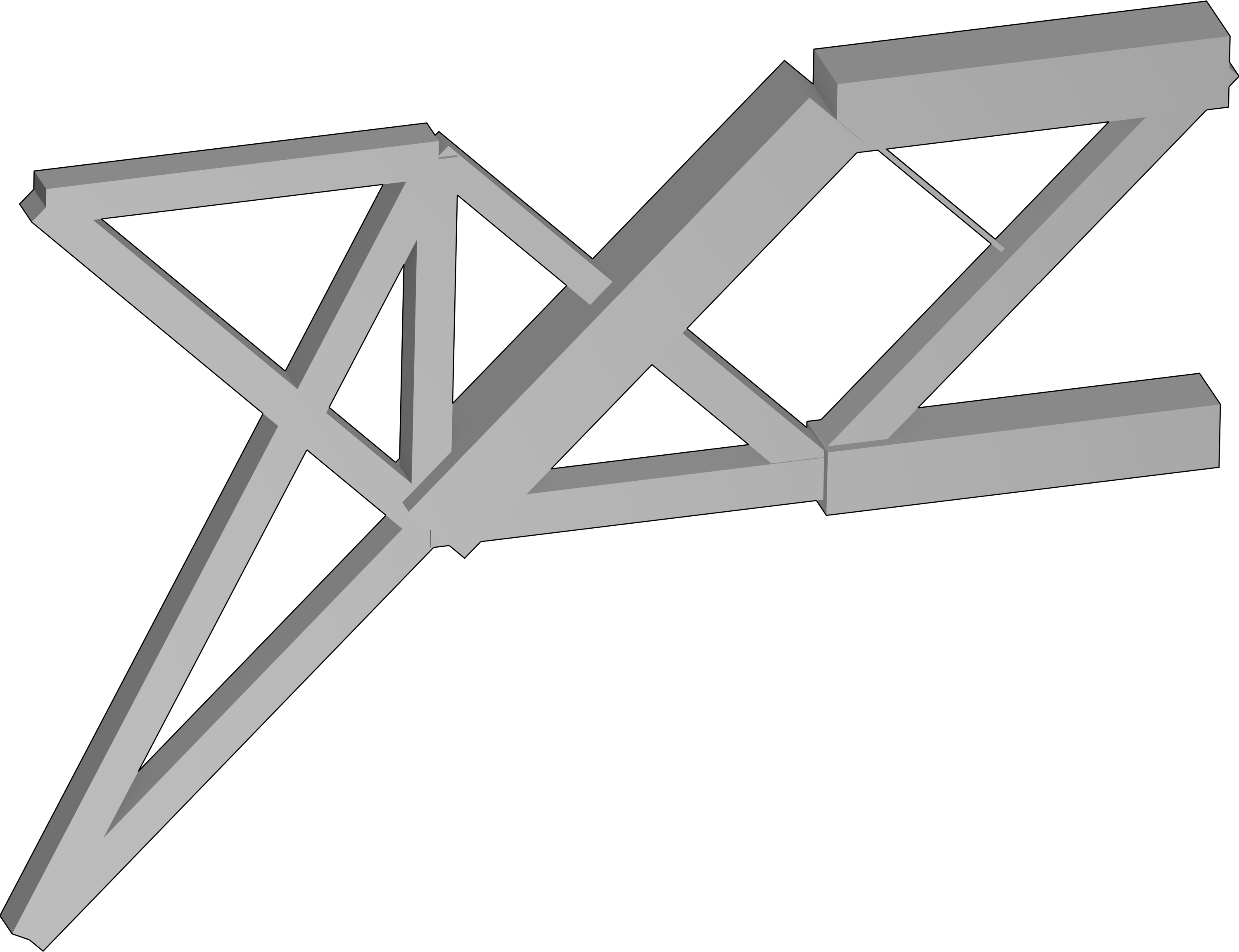}};
			\end{axis}
		\end{tikzpicture}
		\caption{}
	\end{subfigure}
	\caption{Frame structure reinforcement problem. (a) Discretization, boundary conditions and cross-section parametrization, and (b) convergence of the proposed relaxation-based approach with visualized feasible upper-bound designs.}
	\label{fig:frame20}
\end{figure*}

Second, we consider the problem of optimizing a structural part. Such problems appear, e.g., in stiffening and reinforcing structure design or in structural component optimizations.

Here, we assume the problem shown in Fig.~\ref{fig:frame20}a, consisting of $12$ nodes that are interconnected with $20$ Euler-Bernoulli beam elements. All these elements have square cross-sections. While the elements drawn in Fig~\ref{fig:frame20}a with a solid line are subject of optimization, the elements denoted with dashed lines share the cross-section area of $0.01$. Further, we have again the dimensionless Young modulus $E=1.0$ and the density $\rho=1.0$.

The structure is fully clamped at the nodes $\circled{a}$ and $\circled{h}$, whereas symmetric boundary conditions are assumed along the $\circled{d}$--$\circled{k}$ axis. The structure is loaded by unitary vertical forces at the nodes $\circled{i}$ and $\circled{j}$, and by a halved vertical force at the node $\circled{k}$.

Because of the fixed elements $\squared{1}$--$\squared{4}$, $\squared{12}$, $\squared{13}$, and $\squared{18}$, the term $\mathbf{K}_0$ is now present in \eqref{eq:polydep}. Using Proposition \ref{prop:inf}, we thus receive a positive value of the compliance infimum, $\inf_{\mathbf{a}} c = 578.9$, which can be approached by the optimized elements only in the limit. Thus, we set $\overline{c} = 1\thinspace000$.

\begin{table}[!b]
	\centering
	\begin{tabular}{lrrrrr}
		$r$ & LB & UB & Time [s] & $n_\mathrm{c}\times m$ & $n$\\
		\hline
		$1$ & $0.133$ & $0.181$ & $0.04$ & $14$, $13\times 1$, $19$ & $104$ \\
		$2$ & $0.170$ & $0.170$ & $29.52$ & $105$, $13\times14$, $266$ & $2\thinspace379$
	\end{tabular}
	\caption{Part design optimization. LB abbreviates lower bound, UB stands for feasible upper bounds, and $r$ is the relaxation number. Further, $n_\mathrm{c}\times m$ denotes the number of $n_\mathrm{c}$ semidefinite constraints of the size $m$, and $n$ is the number of variables.}
	\label{tab:frame20}
\end{table}

After setting $\tilde{\mathbf{a}}=\mathbf{1}$, the optimization problem \eqref{eq:uni} provides us with the optimal scaling factor $\delta = 2.678 \times 10^{-2}$ that is associated with the upper-bound solution of the weight $\overline{w} = 0.285$. Using this bound to make the design space compact and solving the emerging optimization problem \eqref{eq:finalformulation} via the MSOS hierarchy, we receive the lower bound weight of $0.133$ in the first relaxation and the associated feasible upper-bound design of the weight $0.181$. The second relaxation makes the hierarchy converge with respect to both the optimality gap and the flatness of the moment matrices ranks \citep{Curto1996}. The optimal design, shown in Fig.~\ref{fig:frame20}, weights $0.170$. In all the relaxations, setting $\tilde{\mathbf{a}}$ based on the first-order moments produced the same value of $\inf_{\delta} c = 578.9$.

The hierarchy convergence appear summarized in Table~\ref{tab:frame20}. We note here that because of the partitioning in \eqref{eq:constant} and five constant rows/columns, we adopted the Schur complement lemma to reduce the problem size and accelerate its solution.

\section{Results and discussion}\label{sec:conclusion}

In this contribution, we have extended our previous results for global compliance optimization of bending resistant structures \citep{Tyburec2021} to the weight minimization setting. To this goal, we have first exploited monotonicity of the compliance function, and developed a univariate problem \eqref{eq:uni} for computing feasible upper bounds to the weight optimization problem \eqref{eq:original} when the ratio of cross-section areas is fixed. We proposed to solve this problem by a bisection-type algorithm.

Based on such constructed upper bound, it is possible to bound the design variables from above, and thus show that the assumption of algebraic compactness, which is needed for the convergence of the Lasserre hierarchy, is satisfied. Developing and solving an efficient polynomial programming formulation, we have shown that, under mild assumptions, the first order moments from the relaxations may serve as the ratios of the cross-section areas, enabling a~construction of feasible upper bounds in each relaxation. Finally, a comparison of the relaxations lower bounds with the constructed upper bounds establishes a simple sufficient condition of global $\varepsilon$-optimality, and this condition converges to zero in the limit in the case of a~convex set of global minimizers.

We have illustrated these theoretical results on a set of two optimization problems. These problems revealed applicability of the approach to small-scale problems and a rapid convergence of the hierarchy.

We plan to extend our approach in several directions. First, we are interested in problems in structural dynamics such as eigenvalue problems \citep{Achtziger2008} and steady-state harmonic oscillations. Second, we aim to investigate methods for accelerating the optimization process, e.g., by exploiting structural \citep{Zheng2021,Kocvara2020} and term sparsity, or adopting the very recent results in optimization problems with tame structure \citep{Aravanis2022}.

\paragraph{Funding} Marek Tyburec and Michal Ko\v{c}vara acknowledge the support of the Czech Science foundation through project No. 22-15524S. Martin Kru\v{z}\'ik appreciated the support of the Czech Science Foundation via project No. 21-06569K, and by the Ministry of Education, Youth and Sports through the mobility project 8J20FR019. We also acknowledge support by European Union's Horizon 2020 research and innovation program under the Marie Sk\l odowska-Curie Actions, grant agreement 813211 (POEMA).

\section*{Declarations}

\paragraph{Conflict of interest} The authors declare no competing interests.

\paragraph{Replication of results} Source codes are available at \citep{tyburec_sw}.

\appendix
\section{Sensitivity of the compliance function}\label{app:sensitivity}

Here, we derive gradient of the compliance function
\begin{equation}\label{eq:compl}
c_j = \mathbf{f}_j^\mathrm{T} \left[\mathbf{K}_j(\mathbf{a})\right]^{\dagger} \mathbf{f}_j,
\end{equation}
where $\mathbf{f}_j \in \text{Im}(\mathbf{K}_j (\mathbf{a}))$.

We start by writing the pseudo-inverse identity:
\begin{equation}
\left[\mathbf{K}_j(\mathbf{a})\right]^\dagger = \left[\mathbf{K}_j(\mathbf{a})\right]^\dagger \mathbf{K}_j(\mathbf{a}) \left[\mathbf{K}_j(\mathbf{a})\right]^\dagger
\end{equation}
Using the chain rule while exploiting symmetry of $\mathbf{K}_j(\mathbf{a})$ and $\left[\mathbf{K}_j(\mathbf{a})\right]^\dagger$, we obtain
\begin{multline}\label{eq:chaineq}
\left(
2 \left[\mathbf{K}_j(\mathbf{a})\right]^\dagger \mathbf{K}_j(\mathbf{a}) - \mathbf{I}
\right)
\frac{\partial \left[\mathbf{K}_j(a_e)\right]^\dagger}{\partial a_e} =\\
-\left[\mathbf{K}_j(\mathbf{a})\right]^\dagger \frac{\partial \mathbf{K}_j(\mathbf{a}) }{\partial a_e} \left[\mathbf{K}_j(\mathbf{a})\right]^\dagger.
\end{multline}
Next, we need to show that
\begin{equation}\label{eq:diffimage2}
\frac{\partial \left[\mathbf{K}_j(a_e)\right]^\dagger}{\partial a_e} \in \text{Im}\left(\left[\mathbf{K}(\mathbf{a})\right]^\dagger\right),
\end{equation}
or, equivalently,
\begin{equation}\label{eq:diffimage}
\frac{\partial \mathbf{K}_j(a_e)}{\partial a_e} \in \text{Im}\left(\mathbf{K}(\mathbf{a})\right).
\end{equation}
Indeed, since $\mathbf{a} \ge \mathbf{0}$ and $\forall e,i: \mathbf{K}_{j,e}^{(i)} \succeq 0$, we observe that
\begin{subequations}
\begin{align}
\text{Span}\left(\mathbf{K}_j(\mathbf{a})\right) = \cup_{e=1}^{n_\mathrm{e}}\cup_{i=1}^3 \text{Span}\left(\mathbf{K}^{(i)}_{j,e}\right) \cup \text{Span}\left(\mathbf{K}_{j,0}\right)\\
\text{Span}\left(\frac{\partial\mathbf{K}_j(a_e)}{\partial a_e}\right) = \cup_{i=1}^3 \text{Span}\left(\mathbf{K}^{(i)}_{j,e}\right),
\end{align}
\end{subequations}
so that $\text{Span}\left(\frac{\partial\mathbf{K}_j(a_e)}{\partial a_e}\right) \subseteq \text{Span}\left(\mathbf{K}_j(a_e)\right)$. Consequently, the conditions \eqref{eq:diffimage2} with \eqref{eq:diffimage} hold true.

Therefore, since $\left[\mathbf{K}_j(\mathbf{a})\right]^\dagger \mathbf{K}_j(\mathbf{a})$ is a projector onto the range of $\left[\mathbf{K}_j(\mathbf{a})\right]^\dagger$, the gradient of \eqref{eq:compl} evaluates as
\begin{equation}
\frac{\partial c_j(\mathbf{a})}{\partial a_e} =
-\mathbf{f}_j^\mathrm{T}\left[\mathbf{K}_j(\mathbf{a})\right]^\dagger \frac{\partial \mathbf{K}_j(\mathbf{a}) }{\partial a_e} \left[\mathbf{K}_j(\mathbf{a})\right]^\dagger\mathbf{f}_j.
\end{equation}

\bibliography{liter.bib}
\bibliographystyle{springernat}
\end{document}